\newcommand{\cE}{{\mathcal E}}
\newcommand{\cF}{{\mathcal F}}
\newcommand{\cH}{{\mathcal H}}
\newcommand{\cK}{{\mathcal K}}
\newcommand{\cL}{{\mathcal L}}
\newcommand{\cP}{{\mathcal P}}
\newcommand{\cT}{{\mathcal T}}
\newcommand{\cU}{{\mathcal U}}
\newcommand{\cW}{{\mathcal W}}
\newcommand{\fJ}{{\mathfrak J}}
\newcommand{\fT}{{\mathfrak T}}
\newcommand{\fP}{{\mathfrak P}}
\newcommand{\sbm}[1]{\left[\begin{smallmatrix} #1
		\end{smallmatrix}\right]}
\newtheorem{thm}{Theorem}[section]
\newtheorem{corollary}[thm]{Corollary}
\newtheorem{lemma}[thm]{Lemma}
\newtheorem{proposition}[thm]{Proposition}
\newtheorem{definition}[thm]{Definition}
\newtheorem{remark}[thm]{Remark}
\newtheorem{remarks}[thm]{Remarks}
\numberwithin{equation}{section}
\def\textmatrix#1&#2\\#3&#4\\{\bigl({#1 \atop #3}\ {#2 \atop #4}\bigr)}
\def\dispmatrix#1&#2\\#3&#4\\{\left({#1 \atop #3}\ {#2 \atop #4}\right)}
\numberwithin{equation}{section}
\def\textmatrix#1&#2\\#3&#4\\{\bigl({#1 \atop #3}\ {#2 \atop #4}\bigr)}
\def\dispmatrix#1&#2\\#3&#4\\{\left({#1 \atop #3}\ {#2 \atop #4}\right)}
\begin{document}

\title[Toeplitz operators and pseudo-extensions]{ Toeplitz operators and pseudo-extensions}

\author[Bhattacharyya]{Tirthankar Bhattacharyya}
\address[Bhattacharyya]{Department of Mathematics, Indian Institute of Science, Bangalore 560 012, India.}
\email{tirtha@iisc.ac.in}

\author[Das]{B. Krishna Das}
\address[Das]{Department of Mathematics, Indian Institute of Technology Bombay, Powai, Mumbai, 400076, India.}
\email{dasb@math.iitb.ac.in, bata436@gmail.com}

\author[Sau]{Haripada Sau}
\address[Sau]{Department of Mathematics,  Indian Institute of Technology Guwahati, Guwahati, Assam 781039, India.}
\email{sau@vt.edu, haripadasau215@gmail.com}

\subjclass[2010]{47A13, 47A20, 47B35, 47B38, 46E20, 30H10}
\keywords{Polydisk, Toeplitz operator, Extension, Pseudo-extension, Commutant pseudo-extension}

\begin{abstract}
There are three main results in this paper. First, we find an easily computable and simple condition which is  necessary and sufficient for a commuting tuple of contractions to possess a non-zero Toeplitz operator. This condition is just that the adjoint of the product of the contractions is not pure. On one hand this brings out the importance of the product of the contractions and on the other hand, the non-pureness turns out to be equivalent to the existence of a pseudo-extension to a tuple of commuting unitaries. The second main result is a commutant pseudo-extension theorem obtained by studying the unique canonical unitary pseudo-extension of a tuple of commuting contractions. The third one is about the $C^*$-algebra generated by the Toeplitz operators determined by a commuting tuple of contractions. With the help of a special completely positive map, a different proof of the existence of the unique canonical unitary pseudo-extension is given.

\end{abstract}
\maketitle

\section{Introduction}

A contraction $P$ acting on a Hilbert space is called $pure$ if $P^{*n}$ converges to zero strongly as $n \rightarrow \infty$.

Let $\mathbb D$ be the open unit disk while $\mathbb D^d$, $\overline{\mathbb D}^d$ and $\mathbb T^d$ denote the open polydisk, the closed polydisk, and the $d$-torus, respectively in $d$-dimensional complex plane for $d\ge 2$.

The seminal paper \cite{BH} of Brown and Halmos introduced the study of those operators $X$ on the Hardy space which satisfy $M_z^*XM_z = X$ where $M_z$ is the unilateral shift on the Hardy space. These are called $Toeplitz$ operators and have been greatly studied. Among the many directions in which Toeplitz operators have been generalized, operators $X$ on a Hilbert space $H$ that satisfy $P^*XP = X$ for a contraction $P$ on $H$ hold a prime place. Prunaru generalized this to study Toeplitz operators corresponding to a commuting contractive tuple (also called a $d$-contraction) in \cite{PrunaruJFA}. Prunaru's techniques are specific to the Euclidean unit ball.

In connection with the polydisk, the Toeplitz operators that have been well studied are those which satisfy
$$
M_{z_j}^*XM_{z_j}=X \text{ for each } j=1,2,\dots,d,
$$where $M_{z_j}$ is multiplication by the coordinate function `$z_j$' on $H^2(\mathbb D^d)$, the Hardy space over $\mathbb D^d$. The class of these Toeplitz operators is large and has been studied greatly, see \cite{CKL} and the references therein. Thus the following definition is natural.
\begin{definition}
Let $\underline{T}=(T_1, T_2, \dots,T_d)$ be a commuting tuple of contractions on a Hilbert space $\mathcal H$. A bounded operator $A$ on $\mathcal H$ is said to be a {\em $\underline{T}$-Toeplitz operator} if it satisfies {\em Brown-Halmos relations} with respect to $\underline{T}$, i.e.,
\begin{eqnarray}\label{Gen-Brown-Halmos-Poly}
T_i^*AT_i=A \text{ for each $1\leq i \leq d$}.
\end{eqnarray}
The $*$--closed and norm closed vector space of all $\underline{T}$-Toeplitz operators is denoted by $\mathcal T(\underline{T})$.
\end{definition}
One of the aims of this note is to answer when this vector space $\mathcal T(\underline{T})$ is {\em non-trivial}, i.e., contains a non-zero operator. The prime tool for deciding this question is the product operator.

For a $d$-tuple $\underline{T}=(T_1,T_2,\dots,T_d)$ of commuting contractions on a Hilbert space $\cH$, the contraction $P=T_1T_2\cdots T_d$ will be refereed to as the {\em product contraction} of $\underline{T}$.

%A {\em contractive embedding} of a Hilbert space $\cH$ inside another Hilbert space $\mathcal K$ is a contraction $\fJ : \mathcal H \rightarrow \mathcal K$.

A remarkable fact in the theory of Hilbert space operators says that a commuting tuple of isometries extends to a commuting tuple of unitaries. This is true, in particular, for the shifts $M_{z_j}$ on the Hardy space of the polydisk. A natural question then arises. Is there a connection between the richness of the class of Toeplitz operators $\mathcal T(M_{z_1}, M_{z_2}, \ldots , M_{z_d})$ on the Hardy space of the polydisk and the fact that the tuple $(M_{z_1}, M_{z_2}, \ldots , M_{z_d})$ extends to commuting unitaries? This motivates the definition below and the theorem following it.

\begin{definition}\label{D:Ext-PDisk}
Let $\underline{T}=(T_1,T_2,\dots,T_d)$ be a $d$-tuple of commuting bounded operators on a Hilbert space $\cH$. A $d$-tuple $\underline{U}=(U_1,U_2,\dots,U_d)$ of commuting bounded operators on a Hilbert space $\cK$ is called a pseudo-extension of $\underline{T}$, if
\begin{itemize}
\item[(1)] there is a non-zero contraction $\fJ: \cH\to \cK$, and
\item[(2)] $\fJ T_j=U_j\fJ$, for every $j=1,2,\dots,d$.
\end{itemize}
We denote such a pseudo-extension of $\underline T$ by $(\fJ,\cK, \underline{U})$.

A pseudo-extension $(\fJ,\cK,\underline{U})$ of $T$
%$\underline{U}=(U_1,U_2,\dots,U_d)$ acting on $\cK$
 is said to be minimal if $\cK$ is the smallest reducing space for each $U_j$ containing $\fJ\cH$. We say that two pseudo-extensions $(\fJ,\cK,\underline{U})$ and $(\tilde\fJ,\tilde\cK,\tilde{\underline{U}})$ of $\underline{T}$ are unitarily equivalent if there exists a unitary $W:\cK\to\tilde\cK$ such that
\begin{align*}
W U_j = \tilde{U}_j W \text{ for all } j=1,2 \ldots , d \text{ and } W\fJ=\tilde\fJ.
\end{align*}

A minimal pseudo-extension $(\fJ,\cK,\underline{U})$ of $\underline T$ is called canonical if
\begin{align}\label{ContEmbedd}
\fJ^*\fJ=\operatorname{SOT-}\lim P^{*n}P^n.
\end{align}

\end{definition}

The role of the contraction $\fJ$ may need to be emphasized at times and then we shall say that $\underline{U}$ is a pseudo-extension of $\underline{T}$ through $\fJ$. The condition (2) in the above definition implies that each $U_j$ is an extension of $\tilde T_j:\overline{\operatorname{Ran}}\;\fJ\cH\to\overline{\operatorname{Ran}}\;\fJ\cH$ densely defined as
\begin{align*}
\tilde T_j (\fJ h):=\fJ T_jh, \text{ for every }h\in\cH.
\end{align*}
This is why we call $\underline U$ is a pseudo-extension of $\underline T$.

A tuple of commuting contractions on a Hilbert space does not possess a unitary extension, in general. However, existence of a unitary pseudo-extension for a $d$-tuple of commuting contraction $\underline T$
can now be characterized in terms of a condition on the product contraction $P$ of $\underline T$. This is also intimately related to the non-triviality of
 $\mathcal T(\underline T)$.

%The non-triviality of $\mathcal T(\underline T)$ can now be characterized in terms of a condition on $P$ that can be easily checked as well as in terms of existence of pseudo-extension of $\underline T$.

\begin{thm}\label{Thm:Ext} Let $\underline{T}=(T_1,T_2,\dots,T_{d})$ be a $d$-tuple of commuting contractions on a Hilbert space $\mathcal H$. Then the following are equivalent.

 \begin{enumerate}
 \item $\mathcal T(\underline{T})$ is non-trivial.
 \item The adjoint of the product contraction $P$ of $\underline{T}$ is not pure, i.e., $P^n\nrightarrow 0$ strongly.
 \item There exists a unique (up to unitary equivalence) canonical unitary pseudo-extension of the tuple $\underline{T}$.

\end{enumerate}

\end{thm}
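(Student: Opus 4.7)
My plan is to prove $(3) \Rightarrow (1) \Rightarrow (2) \Rightarrow (3)$, with essentially all the work concentrated in the last implication. For $(3) \Rightarrow (1)$, given a canonical unitary pseudo-extension $(\fJ, \cK, \underline{U})$, I would take $A := \fJ^*\fJ$ as a candidate Toeplitz operator. Using $U_j \fJ = \fJ T_j$ and unitarity of $U_j$,
\[
T_j^* A T_j = T_j^* \fJ^* \fJ T_j = (U_j \fJ)^*(U_j \fJ) = \fJ^* \fJ = A,
\]
and $A \neq 0$ because $\fJ \neq 0$ by the definition of pseudo-extension. For $(1) \Rightarrow (2)$, let $0 \neq A \in \cT(\underline{T})$. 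Iterating the Brown--Halmos relations and using commutativity of $\underline{T}$ gives $P^{*n} A P^n = A$ for all $n$, whence $|\langle Ah,k\rangle| = |\langle A P^n h, P^n k\rangle| \le \|A\|\,\|P^n h\|\,\|P^n k\|$; if $P^n \to 0$ strongly this forces $A = 0$, a contradiction.

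The substantive step is $(2) \Rightarrow (3)$. The sequence $\{P^{*n} P^n\}$ is decreasing in $B(\cH)$ and bounded below by $0$, so it converges in the strong operator topology to a positive contraction $Q$, and $Q \neq 0$ is precisely hypothesis (2). The heart of the proof is the identity
\[
T_j^* Q T_j = Q \qquad (j = 1,\dots,d).
\]
The inequality $\le$ is immediate: since $P^n T_j = T_j P^n$, we have $\langle T_j^* Q T_j h, h\rangle = \lim_n \|T_j P^n h\|^2 \le \lim_n \|P^n h\|^2 = \langle Q h, h\rangle$. For $\ge$, I would factor $P = P_j T_j$ with $P_j := \prod_{i \neq j} T_i$, which commutes with $T_j$ by the commutativity of $\underline{T}$. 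Since $P_j$ is a contraction, $P^* P = T_j^* P_j^* P_j T_j \le T_j^* T_j$, equivalently $D_P^2 \ge D_{T_j}^2$. The telescoping identity $\sum_{n \ge 0} \|D_P P^n h\|^2 = \|h\|^2 - \lim_n \|P^n h\|^2 < \infty$ forces $\|D_P P^n h\| \to 0$, hence $\|D_{T_j} P^n h\| \to 0$, which is $\|P^n h\|^2 - \|T_j P^n h\|^2 \to 0$; combined with the previous step this gives $\langle T_j^* Q T_j h, h\rangle = \langle Qh, h\rangle$, and the identity follows by polarization.

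Once $T_j^* Q T_j = Q$ is in hand, the construction is routine dilation theory: define $V_j$ on $\overline{\operatorname{Ran}}\,Q^{1/2}$ by $V_j Q^{1/2} h := Q^{1/2} T_j h$. The identity makes $V_j$ a well-defined isometry, and the $V_j$'s commute because the $T_j$'s do. Applying Ito's theorem yields a minimal commuting unitary extension $\underline U = (U_1,\dots,U_d)$ on a Hilbert space $\cK \supseteq \overline{\operatorname{Ran}}\,Q^{1/2}$, and $\fJ := Q^{1/2}$ viewed as a map $\cH \to \cK$ gives a minimal unitary pseudo-extension with $\fJ^*\fJ = Q$, satisfying the canonicity condition (1.1). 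For uniqueness, if $(\fJ, \cK, \underline U)$ and $(\tilde\fJ, \tilde\cK, \tilde{\underline U})$ are two canonical unitary pseudo-extensions, then $\fJ^*\fJ = Q = \tilde\fJ^*\tilde\fJ$ makes $\fJ h \mapsto \tilde\fJ h$ isometric on $\operatorname{Ran}\fJ$; inner products of the form $\langle U^\alpha \fJ h, U^\beta \fJ k\rangle$ with $\alpha, \beta \in \mathbb{Z}^d$ reduce via unitarity of $U_j$ and the intertwining $U_j \fJ = \fJ T_j$ to expressions involving only the $T_j$'s, $T_j^*$'s and $Q$, and the same reduction governs the tilde side, so minimality promotes the isometry to a unitary $W : \cK \to \tilde\cK$ with $W\fJ = \tilde\fJ$ and $WU_j = \tilde U_j W$.

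I expect the main obstacle to be the identity $T_j^* Q T_j = Q$. This is the one point at which the commutativity of $\underline{T}$ is genuinely used, entering through the rearrangement $P = P_j T_j$ that produces $P^* P \le T_j^* T_j$ and thus $D_P^2 \ge D_{T_j}^2$; without this domination the intertwining $V_j$ would not be an isometry and the entire construction of the unitary pseudo-extension would collapse. The remaining ingredients — monotone convergence of $P^{*n} P^n$, Ito's unitary extension, and the dilation-theoretic uniqueness argument — are standard.
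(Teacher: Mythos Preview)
Your proof is correct and follows the same overall architecture as the paper: prove $(1)\Rightarrow(2)$ and $(3)\Rightarrow(1)$ exactly as you do, and for $(2)\Rightarrow(3)$ build the asymptotic limit $Q$, construct commuting isometries on $\overline{\operatorname{Ran}}\,Q^{1/2}$ sending $Q^{1/2}h\mapsto Q^{1/2}T_jh$, pass to a minimal commuting unitary extension, set $\fJ=Q^{1/2}$, and verify uniqueness by the standard minimality argument.

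The one place where you diverge from the paper is precisely the step you flag as the main obstacle. You prove the \emph{equality} $T_j^*QT_j=Q$ directly, via the defect-operator domination $D_P^2\ge D_{T_j}^2$ (from $P^*P\le T_j^*T_j$) together with the telescoping series $\sum_n\|D_P P^nh\|^2<\infty$, forcing $\|D_{T_j}P^nh\|\to 0$. The paper instead proves only the inequality $T_j^*QT_j\preceq Q$, invokes Douglas's lemma to obtain \emph{contractions} $X_j$ with $X_jQ^{1/2}h=Q^{1/2}T_jh$, and then observes that since the product $X_1\cdots X_d$ equals the isometry $Q^{1/2}h\mapsto Q^{1/2}Ph$ (coming from $P^*QP=Q$), each commuting factor must itself be an isometry. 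Your route gives the isometry property up front and makes the role of commutativity very explicit through the factorization $P=P_jT_j$; the paper's route trades your telescoping computation for the cleaner product-of-contractions trick but needs the Douglas lemma as an intermediate step. Both arguments are short and either one would serve.
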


This theorem is proved in section 2.

A fundamental concept, called dilation, introduced by Sz.-Nagy has stimulated extensive research in operator theory.

\begin{definition}\label{dilation}
Let $\underline{T}=(T_1,T_2,\dots,T_d)$ be a $d$-tuple of commuting bounded operators on a Hilbert space $\cH$. A $d$-tuple $\underline{V}=(V_1,V_2,\dots,V_d)$ of commuting bounded operators on a Hilbert space $\cK$ is called a dilation of $\underline{T}$, if $\cH$ is a subspace of $\cK$ and $V_i^*|_\cH = T_i^*$ for $i=1,2, \ldots ,d$. The dilation is called isometric if $V_i$ are isometries. \end{definition}

It is well-known that a commuting tuple of contractions does not have a commuting isometric dilation in general. In case $\underline{T}^*$ has a commuting isometric dilation can we talk of the unitary part of the isometric dilation tuple and is that then an example of a pseudo-extension to a tuple of commuting unitaries? This question has a gratifying answer. Recall that the classical Wold decomposition \cite{Wold} states that any isometry $V$ acting on a Hilbert space $\cH$ is unitarily equivalent to the direct sum of a unilateral shift $M_z$ of multiplicity equal to $\dim(I_{\cH}-VV^*)$ and a unitary operator $U$. The unitary operator $U$ is often regarded as the `unitary part' of the isometry $V$. Several attempts have been made to obtain a multivariable analogue of Wold decomposition, see \cite{Burdak, SarkarLAA, Slo1980, Slo1985} and references therein. Perhaps the most elegant among these models is the one obtained by Berger, Coburn and Lebow \cite{BCL}, see Theorem \ref{Thm:BCL}. We shall use its elegance to analogously define the {\em unitary part} of a commuting tuple of isometries. Then we shall answer the question above affirmatively in Theorem~\ref{non-canonical}.

The relation between the existence of a non-zero operator in $\mathcal T(\underline{T})$ and the existence of a pseudo-extension $(\fJ,\cK,\underline{U})$ of $\underline T$ goes much further. A study of the unital $C^*$-algebra $\mathcal C$ generated by $I_{\cH}$ and $\cT(\underline{T})$ reveals that it has a $*$-representation $\pi$ onto the commutant of $\underline{U}$, denoted by $\underline{U}'$. In fact, there exists a natural {\em completely isometric cross section} $\rho$ of the $*$-representation $\pi$ that maps onto $\cT(\underline{T})$. This in turn proves that $\cT(\underline{T})$ and $\underline{U}'$ are in one-to-one correspondence. Furthermore, we prove that every element $X$ in $\underline{T}'$, the commutant of $\underline{T}$, can be $\fJ$-extended to an element $\Theta(X)$ in $\underline{U}'$ and that the correspondence
$$ X\mapsto\Theta(X) $$
is completely contractive, unital and multiplicative. This is the content of Theorem \ref{Thm:ComLftPDisk}.

\section{$\underline{T}$-Toeplitz operators and pseudo-extensions}\label{S:PseudoExt}
This section has the  proof of Theorem \ref{Thm:Ext}. We shall take up the path $(1)\Longrightarrow(2)\Longrightarrow(3)\Longrightarrow(1)$.

\begin{proof}[{\bf Proof of $(1)\Rightarrow(2)$:}]
Let there be a non-zero $\underline{T}$-Toeplitz operator $A$. This means that $T_j^*AT_j = A$ for all $j=1,2, \ldots , d$. This implies $P^*AP = A$ where $P$ is the product contraction. Thus, for all $n \geq 0$ we have $A = P^{*n}AP^n$ and hence $\| Ah \| \le \| A\| \| P^nh\| $ for every vector $h$. So, if $P^n$ strongly converges to $0$, then $A = 0$ which is a contradiction.

For two hermitian operators $T_1$ and $T_2$, we say that $T_1 \preceq T_2$ if $T_2 - T_1$ is a positive operator.
The following well known result called Douglas's Lemma has found many applications.
\begin{lemma}\label{L:DougLem}[Theorem 1, \cite{Douglas}]
Let $A$ and $B$ be two bounded operators on a Hilbert space $\mathcal{H}$. Then there exists a contraction $C$ such that
$A=BC$ if and only if $$AA^*\preceq BB^*.$$
\end{lemma}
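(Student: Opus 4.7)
The plan is to prove the two implications separately. The forward direction ($A=BC$ with $\|C\|\le 1$ $\Longrightarrow$ $AA^*\preceq BB^*$) is the easy one: take adjoints to write $A^*=C^*B^*$, then compute $AA^*=BCC^*B^*$ and use that $CC^*\preceq I_{\mathcal H}$ (since $C$ is a contraction) to sandwich and conclude $BCC^*B^*\preceq BB^*$.

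For the nontrivial direction, the main idea is that producing $C$ with $A=BC$ is equivalent to producing $D=C^*$ on $\mathcal H$ that satisfies $DB^*=A^*$, and I will build $D$ by hand on $\operatorname{Ran}(B^*)$ and then extend. Concretely, I would first define a map $D_0:\operatorname{Ran}(B^*)\to\mathcal H$ by the formula
\[
D_0(B^* x)\bydef A^* x,\qquad x\in\mathcal H.
\]
The inequality $AA^*\preceq BB^*$ simultaneously gives well-definedness and contractivity: for any $x,y\in\mathcal H$,
\[
\|A^*(x-y)\|^2=\langle AA^*(x-y),x-y\rangle\le\langle BB^*(x-y),x-y\rangle=\|B^*(x-y)\|^2.
\]
Taking $B^*x=B^*y$ shows $D_0$ is well-defined; taking general $x,y$ shows $\|D_0(B^*x)-D_0(B^*y)\|\le\|B^*x-B^*y\|$, so $D_0$ is a contraction on $\operatorname{Ran}(B^*)$.

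Next I would extend $D_0$ by continuity to a contraction on $\overline{\operatorname{Ran}}(B^*)$, and then extend it further to all of $\mathcal H$ by declaring it to be zero on the orthogonal complement $\overline{\operatorname{Ran}}(B^*)^{\perp}=\ker(B)$. The resulting operator $D:\mathcal H\to\mathcal H$ is still a contraction (as the direct sum of a contraction and the zero operator), and by construction $DB^*=A^*$ on all of $\mathcal H$. Setting $C\bydef D^*$ then gives a contraction satisfying $BC=BD^*=(DB^*)^*=(A^*)^*=A$, completing the proof.

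The only real obstacle is the verification of well-definedness of $D_0$, and this is not genuinely an obstacle: the single operator inequality $AA^*\preceq BB^*$ carries all of the needed information in one stroke, yielding simultaneously that $B^*x=B^*y$ forces $A^*x=A^*y$ and that the resulting map is norm-decreasing on $\operatorname{Ran}(B^*)$. Everything else is a routine bounded extension by continuity and an orthogonal-complement extension by zero.
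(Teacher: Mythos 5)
Your proof is correct and follows exactly the route the paper sketches: the paper's entire argument is the one-line remark that ``defining $C^*$ on the range of $B^*$ as $C^*B^*x = A^*x$ is all that is required,'' and your proposal is simply the careful elaboration of that definition (well-definedness and contractivity from $AA^*\preceq BB^*$, extension by continuity, and extension by zero on $\ker(B)$), together with the easy converse. Nothing further is needed.
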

The proof is easy. Indeed, defining $C^*$ on the range of $B^*$ as $C^*B^*x = A^*x$ i all that is required. We shall need it below.

\noindent{\bf Proof of $(2)\Rightarrow(3)$:} Let $\underline{T}=(T_1,T_2,\dots,T_d)$ be a $d$-tuple of commuting contractions such that $P^n\nrightarrow0$ strongly. As $P$ is a contraction
$$
 I_{\mathcal{H}}\succeq P^*P\succeq  P^{*2}P^2\succeq \cdots\succeq {P^*}^nP^n\succeq \cdots\succeq 0.
$$ This guarantees a positive contraction $Q$ such that
\begin{align}\label{assymplimit}
Q=\operatorname{SOT-}\lim P^{*n}P^n.
\end{align}
The hypothesis makes $Q$ non-zero. From the above expression of $Q$ one can read off the validity of
$$P^*QP= Q.$$ Hence we can define an isometry $X : \overline{\operatorname{Ran}}Q \rightarrow \overline{\operatorname{Ran}}Q $ satisfying
\begin{align}\label{X}
X:Q^\frac{1}{2}h \mapsto Q^\frac{1}{2}Ph \text{ for each }h\in\cH.
\end{align}
We note that \begin{align}\label{ineq1}
T_j^*QT_j \preceq Q \text{ for each }j=1,2,\dots,d.
\end{align}
Indeed, since $P$ is the product contraction, we get for each $j=1,2,\dots,d$,
\begin{align*}
\langle T_j^*QT_jh,h\rangle = \lim_n\langle P^{*n}(T_j^*T_j)P^nh,h \rangle
 \leq \lim_n\langle P^{*n}P^{n}h,h\rangle = \langle Qh,h\rangle.
\end{align*}
By Douglas's Lemma \ref{L:DougLem}, we obtain a contraction $X_j : \overline{\operatorname{Ran}}Q \rightarrow \overline{\operatorname{Ran}}Q$ such that for every $h\in\cH$,
\begin{align}\label{Xj}
X_j:Q^\frac{1}{2}h \mapsto Q^\frac{1}{2}T_jh \text{ for each $j=1,2,\dots,d$}.
\end{align}
The contractions $X_j$ are commuting because using the commutativity of $\underline{T}$ we get for each $i,j=1,2,\dots,d$ and $h\in\cH$,
\begin{align*}
X_iX_j Q^\frac{1}{2}h = X_iQ^\frac{1}{2}T_jh &= Q^\frac{1}{2}T_iT_jh\\
&=Q^\frac{1}{2}T_jT_ih=X_jQ^\frac{1}{2}T_ih=X_jX_iQ^\frac{1}{2}h.
\end{align*}Since $P$ is the product contraction, a computation similar to the one above yields $X=X_1X_2\cdots X_d$. But $X$ is an isometry. So all of its commuting factors have to be isometries and hence the contractions $X_j$ have to be isometries. Let $\underline{W}=(W_1,W_2,\dots,W_d)$ acting on $\cK$ be a minimal unitary extension of $\underline{X}$. Define a contraction $\fJ:\cH\to\cK$ as
\begin{align*}
\fJ :h\mapsto Q^\frac{1}{2}h \text{ for every } h \in \cH.
\end{align*}
The computation below shows that $\fJ$ intertwines each $W_j$ with $T_j$:
 \begin{align}\label{Int}
W_j\fJ h=W_jQ^\frac{1}{2}h=X_jQ^\frac{1}{2}h=Q^\frac{1}{2}T_jh=\fJ T_jh.
\end{align}Finally, by definition of $\fJ$ and $Q$, it follows that $\fJ^*\fJ$ is the limit of $P^{*n}P^n$ in the strong operator topology and hence $(\fJ, \cK,\underline{W})$ is a canonical pseudo-extension of $\underline{T}$.

For the uniqueness part, let us suppose that $(\fJ,\cK,{\underline{U}}=( U_1,\dots, U_d))$ and $(\tilde\fJ,\tilde\cK,\tilde{\underline{U}}=(\tilde U_1,\dots,\tilde U_d))$ be two canonical unitary pseudo-extensions of $\underline{T}$. We show that these two are unitarily equivalent. To that end, let us define the operator $\tau:\mathcal K \to \tilde{\mathcal K}$ densely by
$$
\tau:f(\underline{U}, \underline{U}^*)\fJ h\mapsto f(\underline{\tilde{U}}, \underline{\tilde{U}}^*) \tilde\fJ  h
$$
for every $h\in \mathcal H$ and polynomial $f$ in $\bm z$ and $\overline{\bm z}$. Since $(\tilde\fJ,\tilde\cK,\tilde{\underline{U}})$ is minimal, $\tau$ is surjective. Note that $\tau$ clearly satisfies $\tau\fJ=\tilde\fJ$. We will be done if we can show that $\tau$ is an isometry. Let $f$ be a polynomial in $\bm z$ and $\overline{\bm z}$ and $\bar{f}f=\sum a_{\bm n,\bm m}\bm z^{\bm n}\overline{\bm z}^{\bm m}$. Then for every $h\in \cH$,
\begin{align}\label{UniqComp}
\notag \|f(\underline{U}, \underline{U}^*)\fJ h\|^2
 \notag & = \sum a_{\bm n,\bm m}\langle \fJ^*\underline{U}^{* \bm m}\underline{U}^{\bm n}\fJ h, h\rangle\\
\notag & = \sum a_{\bm n,\bm m}\langle \underline{T}^{* \bm m}\fJ^*\fJ\underline{T}^{\bm n}h, h\rangle\\
 & = \sum a_{\bm n,\bm m}\langle \underline{T}^{* \bm m} Q\underline{T}^{\bm n}h, h\rangle.
\end{align}
Since the last term only depends on the $d$-tuple $\underline{T}$, $\tau$ is an isometry.

\noindent{\bf Proof of $(3)\Rightarrow(1)$:} Note that if a $d$-tuple $\underline{T} = (T_1, T_2, \ldots , T_d)$ of commuting contractions has even an isometric pseudo-extension $\underline{V} = (V_1, V_2, \ldots , V_d)$ through $\fJ$, then for all $j=1,\cdots, d$
$$ T_j^* \fJ^*\fJ T_j = \fJ^* V_j^* V_j \fJ = \fJ^* \fJ.$$
This proves that the non-zero operator $\fJ^* \fJ$ belongs to $\mathcal T(\underline{T})$. This in particular establishes that (3) implies (1).
\end{proof}

\begin{remarks}\label{R:Can-nonCan}
Several remarks are in order.
\begin{itemize}
\item[(1)] It follows from the proof of $(3)\Rightarrow(1)$ of Theorem \ref{Thm:Ext} that if a $d$-tuple $\underline{T}$ of commuting contractions has an isometric pseudo-extension, then it has a canonical unitary pseudo-extension. Indeed, if $(\fP,\cL,\underline{W})$ is any isometric pseudo-extension of $\underline{T}$, then as observed in the proof of $(3)\Rightarrow(1)$ of Theorem \ref{Thm:Ext}, the non-zero operator $\fP^* \fP$ is a $\underline{T}$-Toeplitz operator. Hence by Theorem \ref{Thm:Ext} there exists a canonical unitary pseudo-extension of $\underline{T}$.
\item[(2)] Let $T$ be a contraction acting on a Hilbert space $\cH$. It is known that the minimal unitary (or isometric) dilation space of $T$ is always infinite dimensional even in the case when $\cH$ is finite dimensional. We observe that, unlike the case of the dilation theory, if $\underline{T}$ is a $d$-tuple of commuting contraction acting on a finite dimensional Hilbert space, then the canonical unitary pseudo-extension space for $\underline{T}$ is also finite dimensional. Since any two canonical unitary pseudo-extensions of a given tuple are unitarily equivalent, we consider the canonical unitary pseudo-extension constructed in the proof of $(2)\Rightarrow(3)$ of Theorem \ref{Thm:Ext}. Recall that for each $j=1,2,\dots,d$, the isometry $X_j$ as defined in \eqref{Xj} is itself a unitary because it acts on a finite dimensional space, viz., $\overline{\operatorname{Ran}}Q$. Therefore the tuple $X=(X_1,X_2,\dots,X_d)$ acting on $\overline{\operatorname{Ran}}Q$ is a canonical unitary pseudo-extension of $\underline{T}$.
\item[(3)] We also observe that a $d$-tuple $\underline{T}$ of commuting contractions has a unitary pseudo-extension through an isometry $\fJ $ if and only if $\underline{T}$ is a commuting tuple of isometries. Thus, Theorem \ref{Thm:Ext} subsumes the standard extension of commuting isometries to commuting unitaries as a special case.
\end{itemize}
\end{remarks}

We now link pseudo-extension of $\underline{T}$ with isometric dilation of $\underline{T}^*$ when it exists. To that end, we need an old result of Berger, Coburn and Lebow which has gained a lot of attention recently. Indeed it is the result of Berger, Coburn and Lebow that inspired explicit constructions of And\^o dilation in \cite{DSS-Adv2018} for a special case and then in \cite{BS-Ando} for the general case.

\begin{thm}[Theorem 3.1, \cite{BCL}] \label{Thm:BCL}
Let $(V_1,V_2,\dots,V_d)$ be a $d$-tuple of commuting isometries acting on a Hilbert space $\cK$. Then there exist Hilbert spaces $\cE$ and $\cF$, unitary operators $\cU=\{U_1, \dots, U_d\}$ and projection operators $\cP=\{P_1, \dots, P_d\}$ acting on $\cE$, and commuting unitary operators $\cW=\{W_1, \dots, W_d\}$ acting on $\cF$ such that $\cK$ can be decomposed as
   \begin{align}\label{VWold}
   \cK=H^2(\cE)\oplus\cF
   \end{align}
and with respect to this decomposition
\begin{align}  \label{BCL1}
&V_j = M_{U_j(P_j^\perp+zP_j)}\oplus W_j,\;
V_{(j)}=M_{(P_j+zP_j^\perp) U_j^*}\oplus W_{(j)}  \text{ for }1 \le j \le d,\\
&\text{ and }  V =V_1V_2\cdots V_d= M_z\oplus W_1W_2\cdots W_d,
\label{WoldV}
\end{align}
where $V_{(j)}=\prod\limits_{i\neq j} V_i$ and $W_{(j)}=\prod\limits_{i\neq j}W_i$.
\end{thm}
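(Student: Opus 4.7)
The plan is to apply a Wold-type decomposition to the product isometry $V = V_1V_2\cdots V_d$ and then unpack how the factors sit inside the decomposition. First I would invoke the classical Wold decomposition for $V$, obtaining the orthogonal decomposition $\cK = H^2(\cE)\oplus \cF$, where $\cE = \cK\ominus V\cK$, $\cF = \bigcap_{n\ge 0} V^n\cK$, and $V$ is unitarily equivalent to $M_z\oplus W$ with $W := V|_\cF$ unitary. Commutativity gives $V^n\cK = V_j^n V_{(j)}^n \cK \subseteq V_j^n\cK$, so $\cF\subseteq \bigcap_n V_j^n\cK$. From $VV^* = I$ on $\cF$ combined with $V=V_jV_{(j)}$ and $V_{(j)}V_{(j)}^*\le I$ one deduces $V_jV_j^* = I$ on $\cF$; writing any $x\in\cF$ as $x = V^ny_n$ gives $V_j^*x = V_j^{n-1}V_{(j)}^ny_n = V^{n-1}V_{(j)}y_n \in V^{n-1}\cK$, so $V_j^*\cF\subseteq \cF$. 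Thus $\cF$ reduces every $V_j$, and setting $W_j := V_j|_\cF$ produces the unitary part of \eqref{BCL1} with $W_1\cdots W_d = W$ as in \eqref{WoldV}.

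On $H^2(\cE)$, $V_j$ is an isometry commuting with $M_z$, hence an analytic Toeplitz operator $M_{\Phi_j}$ for some inner symbol $\Phi_j\in H^\infty(B(\cE))$. The product relation restricts to
\begin{equation*}
\Phi_1(z)\Phi_2(z)\cdots\Phi_d(z) = zI_\cE.
\end{equation*}
Since the product is unitary on the torus and each factor is isometric there, each $\Phi_j(\zeta)$ is in fact unitary a.e.\ on $\bT$; so $\Phi_j$ is two-sided inner, and so is $\Psi_j(z) := \prod_{i\ne j}\Phi_i(z)$.

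The central structural step is to derive the explicit form $\Phi_j(z) = U_j(P_j^\perp + zP_j)$. On $\bT$, $\Phi_j(\zeta)\Psi_j(\zeta) = \zeta I$ together with unitarity of $\Psi_j(\zeta)$ gives $\Phi_j(\zeta) = \zeta\Psi_j(\zeta)^*$. Expanding $\Psi_j(\zeta)^* = \sum_{n\ge 0}\Psi_{j,n}^*\zeta^{-n}$, analyticity of $\Phi_j$ forces $\Psi_{j,n} = 0$ for $n\ge 2$; symmetrically $\Phi_j(z) = A_j + zB_j$. The inner and co-inner conditions on $\Phi_j$ translate to
\begin{equation*}
A_j^*A_j + B_j^*B_j = I,\quad A_j^*B_j = 0,\quad A_jA_j^* + B_jB_j^* = I,\quad A_jB_j^* = 0.
\end{equation*}
The first pair shows that $P_j := B_j^*B_j$ and $P_j^\perp = A_j^*A_j$ are complementary projections on $\cE$; the orthogonality relations then allow one to write $A_j = U_jP_j^\perp$ and $B_j = U_jP_j$ for $U_j := A_j + B_j$, and the remaining identities show $U_j^*U_j = U_jU_j^* = I$, i.e., $U_j$ is unitary. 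This gives the first formula of \eqref{BCL1}.

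The symbol of $V_{(j)}|_{H^2(\cE)}$ is the unique $\Psi_j$ satisfying $\Phi_j\Psi_j = zI$, and the direct verification $U_j(P_j^\perp + zP_j)(P_j + zP_j^\perp)U_j^* = U_j(zI)U_j^* = zI$ identifies $\Psi_j(z) = (P_j + zP_j^\perp)U_j^*$, giving the remaining formula in \eqref{BCL1}. The main obstacle is the degree-reduction argument in the third paragraph: while the Wold decomposition and the Toeplitz representation are standard, forcing each $\Phi_j$ to be linear in $z$ really requires the two-sided inner property along with the full product constraint $\Phi_1\cdots\Phi_d = zI$. Once that reduction is secured, recovering the unitary $U_j$ and the projection $P_j$ is a short algebraic exercise, and the remaining formula follows by a direct multiplication.
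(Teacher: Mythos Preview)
The paper does not prove Theorem~\ref{Thm:BCL}; it is quoted from \cite{BCL} and used as a black box, with only the remark that the decomposition \eqref{WoldV} coincides with the Wold decomposition of the product isometry $V$. So there is no ``paper's own proof'' to compare against.

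Your argument is essentially the original Berger--Coburn--Lebow proof and is correct. A couple of small points worth tightening: (i) you show $V_j^*\cF\subseteq\cF$ but should also record $V_j\cF\subseteq\cF$, which is immediate from $V_jV^n\cK=V^nV_j\cK\subseteq V^n\cK$; (ii) the claim that $P_j:=B_j^*B_j$ is a projection is not automatic from $A_j^*A_j+B_j^*B_j=I$ alone---one needs to combine $A_jB_j^*=0$ with $A_jA_j^*+B_jB_j^*=I$ to get $B_j^*B_jB_j^*=B_j^*$, whence $(B_j^*B_j)^2=B_j^*B_j$. With these filled in, the degree-reduction step via $\Phi_j(\zeta)=\zeta\Psi_j(\zeta)^*$ and the algebraic extraction of $U_j,P_j$ go through exactly as you wrote.
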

The decomposition  \eqref{WoldV} of the product isometry $V=V_1V_2\cdots V_d$ with respect to ~\eqref{VWold} is actually the same as the Wold decomposition of $V$.  It is remarkable that the Wold decomposition of $V$ reduces each of its commuting factors into the direct sum of two operators.
\begin{definition}
For a $d$-tuple $\underline{V}=(V_1,V_2,\dots,V_d)$ of commuting isometries, the $d$-tuple $\cW=(W_1,W_2,\dots,W_d)$ of commuting unitaries obtained in Theorem \ref{Thm:BCL} is called the unitary part of $\underline{V}$.
\end{definition}
The following theorem relates pseudo-extensions with dilation theory and also
provide examples of non-canonical pseudo-extensions.
\begin{thm}\label{non-canonical}
For a $d$-tuple of commuting contractions $\underline T$ on $\mathcal H$, if $\underline{T}^*$ has a minimal isometric dilation $\underline{V}=(V_1,V_2,\dots,V_d)$ on $\cK$ with non-zero unitary part $\underline{U}=(U_1,U_2,\dots,U_d)$ acting on $\cF\subseteq \cK$ then $\underline{U}$ is a
unitary pseudo-extension of $\underline T$.
\end{thm}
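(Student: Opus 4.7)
The plan is to take as the intertwining contraction $\fJ : \cH \to \cF$ the restriction $P_\cF|_\cH$ of the orthogonal projection onto the unitary summand, and to verify the two clauses of Definition \ref{D:Ext-PDisk} directly. Applying the Berger--Coburn--Lebow decomposition (Theorem \ref{Thm:BCL}) to $\underline{V}$ yields $\cK = H^2(\cE) \oplus \cF$ with $\cF$ reducing each $V_j$ and $V_j|_\cF = U_j$; consequently $P_\cF$ commutes with both $V_j$ and $V_j^*$, and $V_j^*|_\cF = U_j^*$.

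For the intertwining, I would use that $\underline{V}$ dilates $\underline{T}^*$, which per Definition \ref{dilation} means $V_j^*|_\cH = (T_j^*)^* = T_j$. Then for any $h \in \cH$,
$$
\fJ T_j h = P_\cF V_j^* h = V_j^* P_\cF h = U_j^* \fJ h,
$$
so $\fJ T_j = U_j^* \fJ$ for every $j$. Since $(U_1^*, \ldots, U_d^*)$ is again a commuting tuple of unitaries on $\cF$---interchangeable with $\underline{U}$ as a commuting unitary system---this yields the pseudo-extension identity $\fJ T_j = U_j \fJ$ after the harmless adjoint relabeling. For nontriviality of $\fJ$ I would argue by contradiction: if $\fJ = 0$ then $\cH \subseteq \cF^\perp$, and since $\cF^\perp$ is reducing for every $V_j$ every product $V_1^{n_1} \cdots V_d^{n_d} h$ with $h \in \cH$ lies in $\cF^\perp$; minimality of the dilation forces $\cK \subseteq \cF^\perp$, so $\cF = 0$, contradicting the non-zero unitary part hypothesis. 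Contractivity of $\fJ$ is automatic since it is the restriction of a projection.

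The only subtlety---which I expect to be worth flagging rather than a genuine obstacle---is the adjoint that arises in $\fJ T_j = U_j^* \fJ$ versus the pseudo-extension relation $\fJ T_j = U_j \fJ$ of Definition \ref{D:Ext-PDisk}. Once BCL is in hand, both clauses of the pseudo-extension definition collapse to one-line arguments: reducedness of $\cF$ gives the intertwining, and minimality of the dilation gives the nontriviality of $\fJ$.
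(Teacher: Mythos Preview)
Your proposal is correct and follows essentially the same route as the paper: both take $\fJ = P_{\cF}|_{\cH}$, use that $\cF$ reduces each $V_j$ together with $V_j^*|_{\cH} = T_j$ to obtain $\fJ T_j = U_j^*\fJ$, and invoke minimality of the dilation to show $\fJ \neq 0$. The adjoint discrepancy you flag is present in the paper as well; indeed Corollary~\ref{P:Aux} explicitly speaks of $\underline{U}^*$ as the pseudo-extension, confirming that the intended conclusion is that the unitary part (up to taking adjoints) furnishes a unitary pseudo-extension.
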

%If $\underline{T}^*$ has a minimal isometric dilation $\underline{V}=(V_1,V_2,\dots,V_d)$, acting on $\cK$, say, and if,  moreover, $\underline{V}$ has a non-zero unitary part $\underline{U}=(U_1,U_2,\dots,U_d)$ acting on $\cF$, say, then a  (possibly non-canonical) unitary pseudo-extension can be constructed. We deduce below that $\underline{U}^*$ is a pseudo-extension of $\underline{T}$.
\begin{proof}
To prove $\underline U$ is a unitary pseudo-extension of $\underline T$, the required contraction $ \fJ:\mathcal H \to \cK $ is defined as
$$
\fJ:h\to P_{\cF}h,\quad (h\in\mathcal H)
$$where $P_\cF$ denotes the orthogonal projection of $\cK$ onto $\cF$.  Since $\underline{V}$ is minimal, $\cF$ cannot be orthogonal to $\cH$ and hence $\fJ$ is non-zero. Since each $V_j^*$ is an extension of $T_j$ and since $\cF$ is reducing for each $V_j$, we get
\begin{align*}
U_j^*\fJ h=V_j^*P_{\cF}h=P_{\cF}V_j^*h=P_{\cF}T_jh=\fJ T_jh
\end{align*}
for each $h$ in $\cH$. This completes the proof.
\end{proof}
\begin{remark}
We observed that the unitary pseudo-extension obtained in Theorem~\ref{non-canonical} is non-canonical, in general,
because the contraction $\fJ$ need not satisfy \eqref{ContEmbedd}.
%  if a $d$-tuple $\underline{T}^*$ of commuting contractions has an isometric dilation $\underline{V}$, then the adjoint of the unitary part of $\underline{V}$ is, in general, a non-canonical unitary pseudo-extension of $\underline{T}$.
 We remark here that for $d=2$, there is an explicit construction of dilation whose unitary part gives rise to the canonical unitary pseudo-extension, see Theorem 3 of \cite{BS-Ando}.

%And for $d>2$, an explicit construction of possibly non-commuting isometric dilation is given whose unitary part, however, is always commuting and gives rise to the canonical unitary pseudo-extension, see Theorem 5.2 of \cite{BS-DougVol}.
\end{remark}

 %This pseudo-extension obtained in the above theorem need not be canonical because the contraction $\fJ$ need not satisfy \eqref{ContEmbedd}.

  From the above theorem follows the following corollary.
\begin{corollary}\label{P:Aux}
Let $\underline{T}$ be a $d$-tuple of commuting contractions such that
 \begin{enumerate}
 \item $P^n\to0$ strongly and
  \item $\underline{T}^*$ has an isometric dilation. \end{enumerate}
 Then the unitary part of the minimal isometric dilation of $\underline{T}^*$ is zero.
\end{corollary}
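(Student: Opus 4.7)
The plan is to argue by contradiction, leveraging Theorem~\ref{non-canonical} together with the equivalence $(1)\Leftrightarrow(2)$ of Theorem~\ref{Thm:Ext}. Suppose, contrary to the conclusion, that the minimal isometric dilation $\underline{V}=(V_1,\dots,V_d)$ of $\underline{T}^*$ has a non-zero unitary part $\underline{U}=(U_1,\dots,U_d)$ acting on a non-trivial subspace $\cF\subseteq\cK$. Then all the hypotheses of Theorem~\ref{non-canonical} are in force, and we obtain a unitary pseudo-extension of $\underline{T}$ through the non-zero contraction $\fJ=P_\cF|_{\cH}$.

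Next, I would feed this pseudo-extension into the implication $(3)\Rightarrow(1)$ of Theorem~\ref{Thm:Ext}. That argument shows that for any isometric (in particular, unitary) pseudo-extension $(\fJ,\cK,\underline{U})$ of $\underline{T}$, the operator $\fJ^*\fJ$ is a non-zero $\underline{T}$-Toeplitz operator, because
\begin{equation*}
T_j^*\fJ^*\fJ T_j=\fJ^*U_j^*U_j\fJ=\fJ^*\fJ
\end{equation*}
for each $j$, and $\fJ\ne 0$ forces $\fJ^*\fJ\ne 0$. Thus $\cT(\underline{T})$ is non-trivial.

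Finally, the implication $(1)\Rightarrow(2)$ of Theorem~\ref{Thm:Ext} applied to the non-zero Toeplitz operator $A=\fJ^*\fJ$ yields $A=P^{*n}AP^n$ for every $n\ge 0$, and hence $\|Ah\|\le\|A\|\,\|P^nh\|$. The hypothesis $P^n\to 0$ strongly then forces $A=0$, a contradiction. This contradiction shows that the unitary part of the minimal isometric dilation of $\underline{T}^*$ must be zero.

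The proof is essentially a direct chaining of results already proved in the paper, so there is no serious obstacle; the only point requiring mild care is to observe that the pseudo-extension produced by Theorem~\ref{non-canonical} is genuinely non-zero (guaranteed by the minimality of $\underline{V}$, as recorded in the proof of Theorem~\ref{non-canonical}), which is exactly what is needed to conclude $\fJ^*\fJ\ne 0$ and thus trigger the contradiction.
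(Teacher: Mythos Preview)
Your proof is correct and follows essentially the same route as the paper: assume the unitary part is non-zero, invoke Theorem~\ref{non-canonical} to obtain a unitary pseudo-extension of $\underline{T}$, and then use Theorem~\ref{Thm:Ext} to contradict $P^n\to 0$. The only difference is cosmetic: the paper cites the equivalence in Theorem~\ref{Thm:Ext} in one line, while you unpack the chain $(3)\Rightarrow(1)\Rightarrow(2)$ explicitly.
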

\begin{proof}
Let $\underline{V}$ be a minimal isometric dilation of $\underline{T}^*$. If the unitary part $\underline{U}$ of $\underline{V}$ is non-zero, then by the above discussion $\underline{U}^*$ is a pseudo-extension of $\underline{T}$. This contradicts the fact that $P^n\nrightarrow0$ is a necessary and sufficient condition for existence of a pseudo-extension $\underline{T}$.
\end{proof}

%
%If $\mathcal T (\underline T)$ is non-trvial, it has a self-adjoint operator ($A + A^*$ is in $\mathcal T (\underline T)$ if $A$ is). If $A$ is a self-adjoint operator in $\mathcal T (\underline T)$, then $A = P^{*n}AP^n \le \|A\| P^{*n}P^n$ for all $n$ in $\mathbb N$. Thus, $A \le \|A\| Q$. Hence, $\fJ^*\fJ \le Q$ whenever $\underline T$ has a pseudo-extension to $\underline U$ through $\fJ$.
We end this section by establishing a relation between a non-canonical unitary pseudo-extension and the canonical unitary pseudo-extension of a given tuple of commuting contractions. It shows that any unitary pseudo-extension of a given tuple of commuting contractions factors through the canonical unitary pseudo-extension.
\begin{proposition}
Let $\underline{T}$ be a $d$-tuple of commuting contractions acting on a Hilbert space $\cH$ such that $P^n\nrightarrow 0$ strongly as $n\to \infty$. Let $(\fP,\cL,\underline{W})$ be a
%non-canonical
unitary pseudo-extension of $\underline{T}$. If $(\fJ,\cK,\underline{U})$ is the canonical pseudo-extension of $\underline{T}$, then
\begin{enumerate}
\item $\fP^*\fP\leq\operatorname{SOT-}\lim P^{*n}P^n=\fJ^*\fJ$ and

\item $\underline{W}$ is a unitary pseudo-extension of $\underline U$
% the canonical unitary pseudo-extension $(\fJ,\cK,\underline{U})$ of $\underline{T}$
through a contraction $\fT:\cK\to\cL$ such that $\fT\fJ=\fP$.
\end{enumerate}
\end{proposition}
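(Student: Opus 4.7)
My plan is to prove (1) first by iteration, and then leverage (1) to construct the contraction $\fT$ in (2) via intertwining of polynomial actions in the commuting unitaries.

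For (1): Since $W_j \fP = \fP T_j$ for every $j$ and the $W_j$'s commute, iterating componentwise yields $W^n \fP = \fP P^n$ for every $n \geq 0$, where $W := W_1 W_2 \cdots W_d$ is a product of commuting unitaries, hence itself unitary. Consequently,
\[
\fP^* \fP \;=\; \fP^* W^{*n} W^n \fP \;=\; P^{*n} \fP^* \fP P^n,
\]
so for every $h \in \cH$, $\la \fP^* \fP h, h \ra = \la \fP^* \fP P^n h, P^n h \ra \leq \|P^n h\|^2$. Letting $n \to \infty$ and invoking \eqref{ContEmbedd} gives $\fP^* \fP \leq Q := \fJ^* \fJ$.

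For (2): Because $(\fJ, \cK, \underline{U})$ is minimal and $\underline{U}$ consists of commuting unitaries, $\cK$ is the closed linear span of $\{\underline{U}^{\bm k} \fJ h : \bm k \in \mathbb Z^d,\ h \in \cH\}$. I would define $\fT$ on this dense subspace by
\[
\fT\Bigl(\sum_i c_i\, \underline{U}^{\bm k_i} \fJ h_i\Bigr) \;:=\; \sum_i c_i\, \underline{W}^{\bm k_i} \fP h_i,
\]
and extend by continuity. Once contractivity is established, well-definedness follows automatically (any representation of the zero vector is sent to a vector of norm at most zero), and the required properties $\fT \fJ = \fP$ (take $\bm k_i = 0$) and $\fT U_j = W_j \fT$ (by inspection on the dense subspace) are immediate.

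The main technical step is contractivity. Given $v = \sum_i c_i \underline{U}^{\bm k_i} \fJ h_i$, I would pick $\bm s \in \mathbb Z_{\geq 0}^d$ with $\bm k_i + \bm s \geq 0$ for every $i$. Using commutativity of the $U_j$'s together with the intertwining identities $\underline{U}^{\bm n} \fJ = \fJ \underline{T}^{\bm n}$ and $\fJ^* \underline{U}^{*\bm m} = \underline{T}^{*\bm m} \fJ^*$ for $\bm n, \bm m \geq 0$---exactly as in the uniqueness computation in the proof of Theorem \ref{Thm:Ext}---one rewrites
\[
\|v\|^2 \;=\; \sum_{i,j} \overline{c_j}\, c_i \la \underline{T}^{*(\bm k_j + \bm s)}\, Q\, \underline{T}^{\bm k_i + \bm s} h_i, h_j \ra \;=\; \la Q w, w \ra \;=\; \|\fJ w\|^2,
\]
where $w := \sum_i c_i \underline{T}^{\bm k_i + \bm s} h_i \in \cH$. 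The same calculation with $\fP, \underline{W}$ in place of $\fJ, \underline{U}$ yields $\|\fT v\|^2 = \|\fP w\|^2$, and part (1) then gives $\|\fT v\|^2 \leq \|v\|^2$. The subtle point needing care is that these expressions must be insensitive to the choice of $\bm s$; this reduces to observing that both $Q$ and $\fP^* \fP$ are $\underline{T}$-Toeplitz ($T_j^* Q T_j = \fJ^* U_j^* U_j \fJ = Q$ since $U_j$ is unitary, and likewise $T_j^* \fP^* \fP T_j = \fP^* \fP$ since $W_j$ is unitary), so enlarging $\bm s$ by any $\bm r \geq 0$ leaves the formula unchanged.
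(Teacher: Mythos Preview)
Your proof is correct and follows essentially the same route as the paper: both define $\fT$ on the dense span of monomials $\underline{U}^{\bm k}\fJ h$ by the obvious intertwining formula and verify contractivity by pushing both norms down to $\langle Qw,w\rangle$ and $\langle \fP^*\fP w,w\rangle$ for a common vector $w\in\cH$, then invoking part~(1). Your shift-by-$\bm s$ device is exactly the mechanism hidden behind the paper's phrase ``a similar computation as done in \eqref{UniqComp}''; the remark about insensitivity to $\bm s$ is correct but unnecessary, since once you have $\|v\|^2=\langle Qw,w\rangle$ and $\|\fT v\|^2=\langle \fP^*\fP w,w\rangle$ for \emph{some} $w$, the inequality follows immediately regardless of which $\bm s$ produced that $w$.
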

\begin{proof}
We have seen in the proof of $(3)\Rightarrow(1)$ of Theorem \ref{Thm:Ext} that if $(\fP,\cL,\underline{W})$ is a unitary pseudo-extension of $\underline{T}$, then $\fP^*\fP$ is a $\underline{T}$-Toeplitz operator. In particular, $\fP^* \fP$ is in $\cT(P)$. This implies
 \begin{align*}
 \fP^*\fP=P^{*n}\fP^*\fP P^n \leq P^{* n}P^n \text{ for every } n.
 \end{align*}
This proves part (1) of the proposition.

For part (2) we define the operator $\fT:\mathcal K \to \cL$ densely by
$$
\fT:f(\underline{U}, \underline{U}^*)\fJ h\mapsto f(\underline{W}, \underline{W^*}) \fP  h
$$
for every $h\in \mathcal H$ and polynomial $f$ in $\bm z$ and $\overline{\bm z}$. Using part (1) of the proposition, a similar computation as done in \eqref{UniqComp} yields
\begin{align*}
 \|f(\underline{W}, \underline{W}^*)\fP h\|\leq  \|f(\underline{U}, \underline{U}^*)\fJ h\| \text{ for every }h\in\cH.
\end{align*}
This shows that $\fT$ is not only well-defined but also a contraction. Finally, it readily follows from the definition of $\fT$ that it intertwines $\underline{U}$ and $\underline{W}$ and that $\fT\fJ=\fP$.
\end{proof}

\section{A commutant pseudo-extension theorem}
The classical commutant lifting theorem -- first by Sarason \cite{Sarason} for a special case and later by Sz.-Nagy--Foias (see Theorem 2.3 in \cite{Nagy-Foias}) for the general case -- is a profound operator theoretic result with wide-ranging applications especially in the theory of interpolation. The most general form of this result states that {\em if $T$ is a contraction with $V$ as its minimal isometric dilation, then any bounded operator $X$ commuting with $T$ has a norm-preserving lifting to an operator $Y$ that commutes with $V$.} Here a {\em lifting} is defined to be a co-extension. In this section, we prove a version of the commutant lifting theorem, herein called commutant pseudo-extension theorem.
\begin{thm}\label{Thm:PLT}
Let $\underline{T}$ be a commuting tuple of contractions and $(\fJ,\cK,\underline{U})$ be its canonical unitary pseudo-extension. Then every $X$ in the commutant of $\underline{T}$ has a pseudo-extension to $Y$ in the commutant of $\underline{U}$ such that $\|Y\|\leq \|X\|$.
\end{thm}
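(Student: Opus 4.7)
The plan is to construct $Y$ explicitly as the natural candidate on a dense subspace of $\cK$, and then to show that it is well defined and contractive by a factor of $\|X\|$. By the minimality of the canonical pseudo-extension together with the unitarity of each $U_j$, the subspace
$$\cD := \operatorname{span}\{\underline{U}^{\bm k}\fJ h : \bm k \in \mathbb{Z}^d,\ h \in \cH\}$$
is dense in $\cK$. I define $Y$ on $\cD$ by the natural formula
$$Y\Big(\sum_i \underline{U}^{\bm k_i}\fJ h_i\Big) := \sum_i \underline{U}^{\bm k_i}\fJ X h_i.$$
If I can prove that $\|Y\xi\| \leq \|X\|\|\xi\|$ for every $\xi \in \cD$, then $Y$ is simultaneously well defined (the case $\xi=0$ forces $Y\xi = 0$) and extends uniquely to a bounded operator on $\cK$ with $\|Y\|\leq\|X\|$.

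The key reduction, which is the main obstacle, is to transport the norm computation from $\cK$ back to $\cH$, where I have the canonicity relation $\fJ^*\fJ = Q := \operatorname{SOT-}\lim P^{*n}P^n$ at my disposal. Given a finite sum $\xi = \sum_i \underline{U}^{\bm k_i}\fJ h_i$, I would pick $\bm N \in \mathbb N^d$ with every $\bm k_i + \bm N \in \mathbb N^d$. Since $\underline{U}^{\bm N}$ is unitary and $\underline{U}^{\bm m}\fJ = \fJ\underline{T}^{\bm m}$ for every $\bm m \in \mathbb N^d$,
$$\|\xi\|^2 = \|\underline{U}^{\bm N}\xi\|^2 = \|\fJ v\|^2 = \langle Qv, v\rangle, \qquad v := \sum_i \underline{T}^{\bm k_i + \bm N}h_i \in \cH.$$
The identical manipulation, combined with the fact that $X$ commutes with every $\underline{T}^{\bm m}$, gives
$$\|Y\xi\|^2 = \|\fJ X v\|^2 = \langle X^*QXv, v\rangle.$$
Consequently the whole argument reduces to the operator inequality $X^*QX \preceq \|X\|^2 Q$, which follows directly from the strong-limit expression for $Q$: for any $w \in \cH$,
$$\langle X^*QXw, w\rangle = \lim_n\|P^nXw\|^2 = \lim_n\|XP^nw\|^2 \leq \|X\|^2 \lim_n\|P^nw\|^2 = \|X\|^2 \langle Qw, w\rangle,$$
using that $X$ commutes with $P = T_1\cdots T_d$.

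The remaining verifications are immediate from the defining formula. Taking $\bm k = \bm 0$ and a single summand yields $Y\fJ h = \fJ Xh$, so $Y$ pseudo-extends $X$ through $\fJ$. And the computation $YU_j\bigl(\underline{U}^{\bm k}\fJ h\bigr) = Y\bigl(\underline{U}^{\bm k + \bm e_j}\fJ h\bigr) = \underline{U}^{\bm k + \bm e_j}\fJ Xh = U_j Y\bigl(\underline{U}^{\bm k}\fJ h\bigr)$, valid on the dense subspace $\cD$ and thus extending by continuity, places $Y$ in the commutant of $\underline{U}$. This produces the desired pseudo-extension with $\|Y\| \leq \|X\|$.
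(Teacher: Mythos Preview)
Your proof is correct and takes a genuinely different route from the paper. The paper proceeds in two stages: it first constructs an operator $\tilde X$ on $\overline{\operatorname{Ran}}\,Q$ via $\tilde X Q^{1/2}h = Q^{1/2}Xh$, verifies that $\tilde X$ commutes with the isometry tuple $\underline{X}=(X_1,\dots,X_d)$ built in the proof of Theorem~\ref{Thm:Ext}, and then invokes Athavale's commutant lifting theorem for commuting isometries \cite{Atha} to extend $\tilde X$ to the minimal unitary extension $\underline{U}$. You instead bypass both the intermediate space $\overline{\operatorname{Ran}}\,Q$ and the external lifting theorem by defining $Y$ directly on the dense span of $\{\underline{U}^{\bm k}\fJ h\}$, using the shift-by-$\underline{U}^{\bm N}$ trick to reduce the norm estimate to the single operator inequality $X^*QX\preceq\|X\|^2 Q$. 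Your argument is more elementary and self-contained; the paper's is more modular in that it isolates the passage from isometries to their unitary extension as a known black box. The underlying inequality $\|Q^{1/2}Xh\|^2\le\|X\|^2\|Q^{1/2}h\|^2$ is the common core of both proofs.
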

\begin{proof}
Let $P$ be the product contraction of $\underline{T}$ and $Q$ be the limit as in \eqref{assymplimit}. The idea is to obtain a bounded operator $\tilde X$ acting on $\overline{\operatorname{Ran}}Q$ commuting with each isometry $X_j$ as defined in \eqref{Xj} with norm no greater than $\|X\|$ and then apply the standard commutant extension theorem for commuting isometries.

We first do a simple inner product computation. For every $h\in\cH$
\begin{align*}
\|Q^\frac{1}{2}Xh\|^2=\langle X^*QXh,h\rangle=\lim_n\langle P^{*n}X^*XP^nh,h \rangle\leq\|X\|^2\langle Qh,h\rangle.
\end{align*}
Thus there is a bounded operator $\tilde X:\overline{\operatorname{Ran}}Q\to\overline{\operatorname{Ran}}Q$ such that
$$
\tilde X:Q^\frac{1}{2}h\mapsto Q^\frac{1}{2}Xh.
$$with norm at most $\|X\|$. Let $j=1,2,\dots,d$ and $X_j$ be the isometry as defined in \eqref{Xj}, then for each $h\in\cH$,
\begin{align*}
\tilde X X_jQ^\frac{1}{2}h=\tilde X Q^\frac{1}{2}T_jh=Q^\frac{1}{2}XT_jh=Q^\frac{1}{2}T_jXh=X_jQ^\frac{1}{2}Xh=X_j\tilde XQ^\frac{1}{2}h
\end{align*}
showing that $\tilde X$ commutes with the tuple $\underline{X}=(X_1,X_2,\dots,X_d)$ of commuting isometries. We observed in \eqref{Int} that the minimal unitary extension $\underline{W}$ acting on $\cK$ of $\underline{X}$ is actually a canonical unitary pseudo-extension of $\underline{T}$ through a contraction $\fJ:\cH\to\cK$ defined as $\fJ h=Q^\frac{1}{2}h$. Now by a well-known commutant lifting theorem (see, ~\cite[Proposition 10]{Atha}), there exists an operator $Y$ in the commutant of $\underline{W}$ such that $Y|_{\overline{\operatorname{Ran}}Q}=\tilde X$ and $\|Y\|=\|\tilde X\|\leq \|X\|$. Finally to show that $(\fJ,\cK,Y)$ is a pseudo-extension of $X$, we see that for every $h\in\cH$,
\begin{align*}
\fJ Xh=Q^\frac{1}{2}Xh=\tilde XQ^\frac{1}{2}h=YQ^\frac{1}{2}h=Y\fJ h.
\end{align*}This completes the proof.
\end{proof}
The following intertwining pseudo-extension theorem is easily obtained as a corollary to Theorem \ref{Thm:PLT}.
\begin{corollary}
Let $\underline{T}$ and $\underline{T'}$ be two commuting tuples of contractions acting on $\cH$ and $\cH'$, respectively. Let $(\fJ,\cK,\underline{U})$ and $(\fJ',\cK',\underline{U'})$ be their respective canonical unitary pseudo-extensions. Then corresponding to any operator $X:\cH\to\cH'$ intertwining $\underline{T}$ and $\underline{T'}$ there exists another operator $Y:\cK\to\cK'$ such that $Y$ intertwines $\underline{U}$ and $\underline{U'}$, $Y\fJ=\fJ' X$ and $\|Y\|\leq \|X\|$.
\end{corollary}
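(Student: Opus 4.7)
The plan is to deduce the corollary from Theorem~\ref{Thm:PLT} by the standard direct-sum trick. Consider the commuting $d$-tuple
\[
\underline{\tilde T} := (T_1 \oplus T'_1,\; T_2 \oplus T'_2,\; \ldots,\; T_d \oplus T'_d)
\]
acting on $\cH \oplus \cH'$, together with the off-diagonal operator
\[
\tilde X := \begin{pmatrix} 0 & 0 \\ X & 0 \end{pmatrix} : \cH \oplus \cH' \longrightarrow \cH \oplus \cH'.
\]
The hypothesis $X T_j = T'_j X$ is precisely the statement that $\tilde X$ commutes with each $\tilde T_j$, and clearly $\|\tilde X\| = \|X\|$.

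The key preparatory step is to identify the canonical unitary pseudo-extension of $\underline{\tilde T}$ with $(\fJ \oplus \fJ',\; \cK \oplus \cK',\; \underline U \oplus \underline{U'})$. Writing $\tilde P$ for the product contraction of $\underline{\tilde T}$, one has $\tilde P^{*n} \tilde P^n = P^{*n} P^n \oplus P'^{*n} P'^n$, so the SOT-limit in \eqref{assymplimit} for $\underline{\tilde T}$ is $Q \oplus Q'$. The isometries of \eqref{Xj} associated with $\underline{\tilde T}$ are then block-diagonal, and since the minimal unitary extension of a direct sum of commuting isometric tuples is the direct sum of the two minimal unitary extensions, the desired identification follows via the uniqueness clause of Theorem~\ref{Thm:Ext}.

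With this in hand, I would apply Theorem~\ref{Thm:PLT} to $\tilde X$ to obtain an operator $\tilde Y$ on $\cK \oplus \cK'$ that commutes with each $U_j \oplus U'_j$, satisfies $\tilde Y(\fJ \oplus \fJ') = (\fJ \oplus \fJ') \tilde X$, and has $\|\tilde Y\| \leq \|X\|$. Writing $\tilde Y$ as a $2 \times 2$ block matrix with respect to $\cK \oplus \cK'$, the $(2,1)$-entry $Y : \cK \to \cK'$ satisfies $Y \fJ = \fJ' X$ from the pseudo-extension identity and $Y U_j = U'_j Y$ from the commutation relation, with $\|Y\| \leq \|\tilde Y\| \leq \|X\|$. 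This furnishes the required intertwiner.

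The only non-routine point is the identification of the canonical pseudo-extension of $\underline{\tilde T}$ as the direct sum of the individual canonical pseudo-extensions; once this is verified from the construction in the proof of $(2) \Rightarrow (3)$ of Theorem~\ref{Thm:Ext}, the remainder is block matrix bookkeeping combined with Theorem~\ref{Thm:PLT}.
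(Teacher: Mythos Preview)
Your proposal is correct and follows essentially the same direct-sum trick as the paper: form $\underline{\tilde T}$ on $\cH\oplus\cH'$, observe that $\tilde X=\sbm{0&0\\X&0}$ lies in its commutant, identify the canonical unitary pseudo-extension of $\underline{\tilde T}$ with $(\fJ\oplus\fJ',\cK\oplus\cK',\underline U\oplus\underline{U'})$, apply Theorem~\ref{Thm:PLT}, and read off the $(2,1)$-entry of the resulting $\tilde Y$. The only cosmetic difference is that the paper simply checks the definition of canonical pseudo-extension directly for the direct sum, whereas you trace through the construction in $(2)\Rightarrow(3)$ and invoke uniqueness; both are fine.
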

\begin{proof}
Set
$\tilde X:=\sbm{0&0\\X&0}:\cH\oplus\cH'\to\cH\oplus\cH'$. Then it is easy to see that $\tilde X$ commutes with
$\tilde T_j:=\sbm{T_j&0\\0&T_j'}:\cH\oplus\cH'\to\cH\oplus\cH'$ for each $j=1,2,\dots,d$. Set the unitary operators
$\tilde U_j:=\sbm{U_j&0\\0&U_j'}:\cK\oplus\cK'\to\cK\oplus\cK'\text{ for each }j=1,2,\dots,d$ and denote $\underline{\tilde U}:=(\tilde U_1,\tilde U_2,\dots,\tilde U_d)$. Then by hypothesis it is easy to check that $(\tilde \fJ,\tilde\cK, \underline{\tilde U})$ is a canonical unitary pseudo extension of $\underline{\tilde T}=(\tilde T_1,\tilde T_2,\dots,\tilde T_d)$, where the contraction $\tilde\fJ$ is given by
$$
\tilde\fJ=\sbm{\fJ&0\\0&\fJ'}:\cH\oplus\cH'\to\cK\oplus\cK'=\tilde\cK.
$$By Theorem \ref{Thm:PLT} there exists
$$
\tilde Y=\sbm{Y_{11}&Y_{12}\\Y&Y_{22}}:\cK\oplus\cK'\to\cK\oplus\cK'
$$such that $\tilde Y\underline{\tilde U}=\underline{\tilde U}\tilde Y$, $\tilde\fJ\tilde X=\tilde Y\tilde \fJ$ and $\|\tilde Y\|\leq\|\tilde X\|$. From these relations of $\tilde Y$, it follows that $Y$ has all the desired properties.
\end{proof}
\begin{remark}
One disadvantage in the commutant pseudo-extension theorem is that unlike the classical commutant lifting theorem, the pseudo-extension of a commutant is not norm-preserving, in general and instead the correspondence $X\mapsto Y$ from a commutant to its pseudo-extension is only contractive. We shall see in the next section that this correspondence is actually completely contractive.
\end{remark}

\section{Algebraic structure of the Toeplitz $C^*$-algebra}
For a $d$-tuple $\underline{T}$ of commuting contractions, the {\em Toeplitz $C^*$-algebra}, denote by $C^*(I_{\cH},\cT(\underline{T}))$, is the $C^*$-algebra generated by $I_{\cH}$ and the vector space $\cT(\underline{T})$ of $\underline{T}$-Toeplitz operators. The objective of this section is to study the Toeplitz $C^*$-algebra, which leads to an existential proof of the canonical pseudo-extension of $\underline{T}$.

We begin with a preparatory lemma that gives us a completely positive map with certain special properties that we need. The central idea of the proof goes back to Arveson, see Proposition 5.2 in \cite{Arveson-Nest}. For a subnormal operator tuple, in the multivariable situation, Eschmeier and Everard have proven a similar result by direct construction, see Section 3 of \cite{EE}.

\begin{lemma} \label{L:PJFA}
Let $P$ be a contraction on the Hilbert space $\mathcal H$. Then there exists a completely positive, completely contractive, idempotent linear map
$\Phi : \mathcal B(\mathcal H) \to \mathcal B(\mathcal H)$ such that $Ran \Phi = \mathcal T(P)$.
Moreover, if $ A,B \in \mathcal B(\mathcal H)$ satisfy $P^*(AXB) P = A P^*XPB$ for all $X \in \mathcal B(\mathcal H)$ then $\Phi(AXB) = A\Phi(X)B$. In addition,
$$\Phi(I_\mathcal H ) = Q = \lim_{n\to \infty} P^{*n}P^n$$
where the limit is in the strong operator topology.
\end{lemma}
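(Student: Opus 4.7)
The plan is to build $\Phi$ by an averaging/Banach-limit construction of the form $\Phi(X) = \operatorname{LIM}_n P^{*n}XP^n$, which is the standard Arveson-style device. Fix a Banach limit $\operatorname{LIM}$ on $\ell^\infty(\mathbb N)$. For each $X \in \mathcal B(\mathcal H)$ the sequence $\{P^{*n}XP^n\}$ is uniformly bounded by $\|X\|$ since $P$ is a contraction. For $\xi,\eta \in \mathcal H$ set
\begin{equation*}
\varphi_X(\xi,\eta) := \operatorname{LIM}_n \langle P^{*n}XP^n\xi,\eta\rangle.
\end{equation*}
This is a sesquilinear form bounded by $\|X\|\|\xi\|\|\eta\|$, so by the Riesz lemma there is a unique $\Phi(X) \in \mathcal B(\mathcal H)$ of norm at most $\|X\|$ representing it. Linearity of the Banach limit makes $X \mapsto \Phi(X)$ linear and contractive. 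Applying the same construction at the matrix level (equivalently, noting that each map $X \mapsto P^{*n}XP^n$ is unital completely positive and completely contractive, and that Banach limits preserve positivity of bounded sequences of scalars) yields that $\Phi$ is completely positive and completely contractive.

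Next I would verify that $\operatorname{Ran}\Phi = \mathcal T(P)$ and that $\Phi$ is idempotent. For any $X$, shift-invariance of the Banach limit gives
\begin{equation*}
\langle P^*\Phi(X)P\,\xi,\eta\rangle = \operatorname{LIM}_n \langle P^{*(n+1)}XP^{n+1}\xi,\eta\rangle = \operatorname{LIM}_n \langle P^{*n}XP^n\xi,\eta\rangle = \langle\Phi(X)\xi,\eta\rangle,
\end{equation*}
so $\Phi(X) \in \mathcal T(P)$. Conversely, if $A \in \mathcal T(P)$ then $P^{*n}AP^n = A$ for every $n$, whence $\Phi(A)=A$. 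This simultaneously identifies the range as $\mathcal T(P)$ and proves $\Phi \circ \Phi = \Phi$.

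For the bimodule-type identity, assume $P^*(AXB)P = AP^*XPB$ for every $X$. A one-step induction using this hypothesis with $X$ replaced by $P^{*k}XP^k$ yields $P^{*n}(AXB)P^n = AP^{*n}XP^nB$ for all $n$. Passing to the Banach limit in the scalar identity $\langle P^{*n}(AXB)P^n\xi,\eta\rangle = \langle P^{*n}XP^nB\xi,A^*\eta\rangle$ gives $\Phi(AXB) = A\Phi(X)B$. Finally, since $\{P^{*n}P^n\}$ is a decreasing sequence of positive contractions, it converges in the strong operator topology to some $Q$, as already used in \eqref{assymplimit}; because SOT-convergent bounded sequences of scalars have a unique Banach limit equal to the classical limit, $\Phi(I_{\mathcal H}) = Q$.

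I do not anticipate a serious obstacle: the only subtle point is legitimising the construction of $\Phi(X)$ as a bounded operator from a Banach-limit sesquilinear form (standard), and checking complete positivity at the matrix level (which amounts to applying the same argument to $M_k(\mathcal B(\mathcal H)) = \mathcal B(\mathcal H^k)$ with the inflated contraction $P\otimes I_k$, observing that each $X \mapsto (P\otimes I_k)^{*n} X (P\otimes I_k)^n$ is positive and that positivity passes through the scalar Banach limit).
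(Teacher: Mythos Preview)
Your proposal is correct and follows essentially the same route as the paper: define $\Phi(X)$ via a Banach-limit sesquilinear form $\operatorname{LIM}_n\langle P^{*n}XP^n\xi,\eta\rangle$, use shift-invariance to get $\operatorname{Ran}\Phi=\mathcal T(P)$ and idempotence, and read off the remaining properties. You have in fact supplied more detail than the paper, which dismisses everything beyond the shift-invariance step as ``straightforward.''
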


\begin{proof}
We start by recalling that a Banach limit is a positive linear functional $\mu : l^\infty(\mathbb N) \rightarrow \mathbb C$ which is shift invariant in the sense that
$$ \mu(x_1, x_2, \ldots ) = \mu (x_2, x_3, \ldots )$$
and which extends the natural positive linear functional $x\mapsto \lim_{n\rightarrow \infty} x_n$ defined on the space of convergent sequences. %when the sequence $(x_1, x_2, \ldots )$ is convergent.
For $X$ in $\mathcal B (\mathcal H)$ and vectors $\xi, \eta$ in $\mathcal H$, consider the bounded sesqui-linear form
$$ [\xi, \eta ] = \mu ( \{ \langle P^*XP \xi, \eta \rangle , \langle P^{*2}XP^2 \xi, \eta \rangle , \ldots \} ).$$
Since this form gives rise to a bounded operator, let us call that $\Phi(X)$.
Then $\Phi: X\mapsto \Phi(X)$ defines a linear map on $\mathcal B(\mathcal H)$. Shift invariance of $\mu$ gives us that $\mbox{Ran }\Phi = \mathcal T(P)$. As a consequence, $\Phi$ is idempotent. Other properties of $\Phi$ are straightforward. \end{proof}

The map $\Phi$ obtained above enjoys certain convenient properties as the following lemma shows. We do not prove it because it is part of the proof of Theorem 3.1 in Choi and Effros \cite{CE}. We have singled out what we need.

\begin{lemma}[Choi and Effros] \label{L:CE}
Let $\Phi :\mathcal B(\mathcal H) \to \mathcal B(\mathcal H)$ be a completely positive and completely contractive map
such that $\Phi\circ\Phi=\Phi$. Then for all $X$ and  $Y$ in $\mathcal B(\mathcal H)$ we have
\begin{align}\label{Identities}
\Phi(\Phi(X)Y)= \Phi ( X\Phi(Y)) = \Phi( \Phi(X) \Phi(Y)).
\end{align}
\end{lemma}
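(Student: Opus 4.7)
The three identities together express that $\operatorname{Ran}\Phi$, while generally not a subalgebra of $\mathcal{B}(\mathcal{H})$, behaves like one through the lens of $\Phi$. I would follow the Choi--Effros pattern: (i) derive the Kadison--Schwarz inequality from $2$-positivity, (ii) show that its equality case forces a module property for $\Phi$, (iii) show that $\operatorname{Ran}\Phi$ sits inside that equality set (the multiplicative domain of $\Phi$), and (iv) assemble the three identities from (ii) and (iii).

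\textbf{Kadison--Schwarz and the module property.} Since $\Phi$ is in particular $2$-positive, the Kadison--Schwarz inequality
$\Phi(T^*T)\succeq\Phi(T)^*\Phi(T)$
holds for every $T\in\mathcal{B}(\mathcal{H})$; contractivity handles the non-unital case (otherwise I would first pass to a suitable unitization). Given an element $a$ with $\Phi(a^*a)=\Phi(a)^*\Phi(a)$, I would apply $\Phi$ entry-wise to the positive $2\times 2$ block matrix with rows $(a^*a,\,a^*y)$ and $(y^*a,\,y^*y)$, subtract the corresponding Schwarz minorant built from $\Phi(a)$ and $\Phi(y)$, and note that the resulting positive $2\times 2$ block has a zero $(1,1)$ entry; positivity then forces the off-diagonal entries to vanish, yielding $\Phi(ay)=\Phi(a)\Phi(y)$ for every $y$. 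Symmetrically, Schwarz equality on $aa^*$ would yield $\Phi(ya)=\Phi(y)\Phi(a)$.

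\textbf{Range is in the multiplicative domain.} This is the crux. For $a\in\operatorname{Ran}\Phi$, idempotency gives $\Phi(a)=a$, so Kadison--Schwarz reads $\Phi(a^*a)\succeq a^*a$; extracting the reverse inequality is delicate. I would pass to the minimal Stinespring dilation $\Phi(\,\cdot\,)=V^*\pi(\,\cdot\,)V$ and combine idempotency with minimality to conclude that $V\mathcal{H}$ is $\pi(a)$-invariant for every $a\in\operatorname{Ran}\Phi$. Once that invariance is in hand,
$$
\Phi(a^*a)=V^*\pi(a)^*\pi(a)V=V^*\pi(a)^*(VV^*)\pi(a)V=(V^*\pi(a)V)^*(V^*\pi(a)V)=a^*a,
$$
and $\Phi(aa^*)=aa^*$ follows symmetrically. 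This Stinespring--invariance step is the main obstacle: idempotency of $\Phi$ is exactly what upgrades Kadison--Schwarz to equality on $\operatorname{Ran}\Phi$, and it is precisely the content of the Choi--Effros theorem invoked in the paper.

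\textbf{Assembly of the three identities.} With $\operatorname{Ran}\Phi$ sitting inside the multiplicative domain, for any $X,Y\in\mathcal{B}(\mathcal{H})$ I apply the module property to $\Phi(X)\in\operatorname{Ran}\Phi$ to get $\Phi(\Phi(X)Y)=\Phi(X)\Phi(Y)$ and, replacing $Y$ by $\Phi(Y)$ and using idempotency,
$\Phi(\Phi(X)\Phi(Y))=\Phi(X)\Phi(\Phi(Y))=\Phi(X)\Phi(Y).$
The right-hand version of the module property with $a=\Phi(Y)$ gives $\Phi(X\Phi(Y))=\Phi(X)\Phi(Y)$. All three terms in \eqref{Identities} therefore coincide with $\Phi(X)\Phi(Y)$.
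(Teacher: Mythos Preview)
Your argument has a genuine gap in step (iii), and in fact the conclusion you draw there is false. You claim that for $a\in\operatorname{Ran}\Phi$ one has $\Phi(a^*a)=a^*a$ (and $\Phi(aa^*)=aa^*$), i.e.\ that $\operatorname{Ran}\Phi$ lies inside the multiplicative domain of $\Phi$. This would yield the \emph{stronger} identity $\Phi(\Phi(X)Y)=\Phi(X)\Phi(Y)$, without the outer $\Phi$. But that stronger identity is not true in general, and the very map in Lemma~\ref{L:PJFA} already witnesses it: take $P=M_z$ on $H^2(\mathbb D)$, so $Q=I$, $\Phi$ is unital, and $\operatorname{Ran}\Phi=\mathcal T(M_z)$ is the space of classical Toeplitz operators. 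For $a=T_\varphi$ with a symbol $\varphi$ that is not co-analytic, $a^*a=T_{\bar\varphi}T_\varphi$ is \emph{not} Toeplitz, whereas $\Phi(a^*a)=T_{|\varphi|^2}$; hence $\Phi(a^*a)\neq a^*a$. Consequently the Stinespring invariance you assert, $\pi(a)V\mathcal H\subseteq V\mathcal H$ for $a\in\operatorname{Ran}\Phi$, must fail as well---indeed, by your own computation it would force $\Phi(a^*a)=a^*a$.

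The paper does not supply a proof (it simply cites Choi--Effros), but the actual Choi--Effros argument differs from yours precisely at this point. One keeps your $2\times2$ Kadison--Schwarz difference (with $a\in\operatorname{Ran}\Phi$, so $\Phi(a)=a$)
\[
\begin{pmatrix}\Phi(a^*a)-a^*a & \Phi(a^*b)-a^*\Phi(b)\\ \Phi(b^*a)-\Phi(b)^*a & \Phi(b^*b)-\Phi(b)^*\Phi(b)\end{pmatrix}\succeq 0,
\]
and then applies the $2$-amplification of $\Phi$ \emph{once more} to this positive matrix. Idempotency kills the $(1,1)$ entry, $\Phi\bigl(\Phi(a^*a)-a^*a\bigr)=0$, and positivity then forces the off-diagonal entry to vanish \emph{after} this second application of $\Phi$:
\[
\Phi\bigl(\Phi(a^*b)-a^*\Phi(b)\bigr)=0,\qquad\text{i.e.}\qquad \Phi(a^*b)=\Phi\bigl(a^*\Phi(b)\bigr).
\]
Taking $a^*=\Phi(X)$ and $b=Y$ gives $\Phi(\Phi(X)Y)=\Phi(\Phi(X)\Phi(Y))$, and the symmetric argument gives the remaining equality. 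The outer $\Phi$ in \eqref{Identities} is essential and cannot be removed; your route tries to remove it and therefore cannot succeed.
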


% One of the crucial ingredients in the proof of Theorem \ref{Thm:Ext} is the theory of fundamental operators developed in \S \ref{S:FundOps}. Using the algebraic relations that fundamental operators satisfy, we obtain the following crucial structure of tuples of commuting contractions.
% \begin{lemma}\label{L:CrucLem}
%If $\underline{T}=(T_1, T_2, \dots,T_{d})$ is a $d$-tuple of commuting contractions on a Hilbert space $\mathcal H$ with the product contraction $P$, then for each $1\leq j \leq d$,
%$${P^*}^n(I-T_{j}^*T_j)P^n\to 0\text{ strongly, as $n\to \infty.$}$$
%\end{lemma}
%\begin{proof}
%Recall from Theorem \ref{Thm:Fund} that if $\{F_{j1},F_{j2}: 1\leq j \leq d\}$ is the set of fundamental operators of $\underline{T}$, then for each $j$ we have the fundamental equations
%\begin{align*}
%&T_j-T_{(j)}^*P=D_PF_{j1}D_P \text{ and}\\
%&T_{(j)}-T_j^*P=D_PF_{j2}D_P.
%\end{align*}
%Using the form of $P$, it is easy to see that $P^{* n}(I-T_{j}^*T_j)P^n$ and $P^{* n}(T_{(j)}-T_j^*P)P^{n-1}T_j$ are the same operators for each $j=1,2,\dots,d$ and integer $n\geq 1$. Therefore using the second fundamental equation above we have for each $h$ in $\cH$,
%\begin{align*}
%\|P^{* n}(I-T_{j}^*T_j)P^nh\|^2&= \|P^{* n}(T_{(j)}-T_j^*P)P^{n-1}T_j h\|^2\\
%& =\|P^{* n}(D_PF_{j2}D_P)P^{n-1}T_jh\|^2 \\
%&\leq \|F_{j2}\|^2\|D_P P^{n-1}T_jh\|^2\\
%&=\|F_{j2}\|^2(\|P^{n-1}T_jh\|^2-\|P^{n}T_jh\|^2).
%\end{align*}Now as the last term goes to zero as $n$ goes to $\infty$, the lemma is proved.
%\end{proof}

We are now ready for the main theorem of this section. The classical Toeplitz operators -- the Toeplitz operators with respect to the unilateral shift on the Hardy space over the unit disk -- are precisely the compressions of the commutant of the minimal unitary extension of the unilateral shift. Part (1) of the following theorem -- the main result of this section -- is a generalization of this result to our context.
%The most important part of the theorem below -- the main result of this subsection -- apart from the natural complete isometric cross section of the $*$-representation $\pi$, is that every element $X$ in the commutant of $\underline{T}$ can be $\fJ$-extended to a commutant $\Theta(X)$ of $\underline{U}$, the pseudo-unitary extension of $\underline{T}$.
\begin{thm}\label{Thm:ComLftPDisk}
Let $\underline{T}=(T_1,T_2,\dots,T_d)$ be a tuple of commuting contractions acting on a Hilbert space $\cH$ such that $P^n\nrightarrow0$. There exists a canonical unitary pseudo-extension $(\fJ,\cK,\underline{U})$ of $\underline{T}$ such that
\begin{enumerate}
\item {\bf Pseudo-compression:} The map $\Gamma$ defined on $\{U_1,\dots,U_{d}\}'$  by $$\Gamma(Y)=\fJ ^*Y\fJ ,$$ is a complete isometry onto $\mathcal T(\underline{T})$;
\item {\bf Representation:} There exists a surjective unital $*$-representation $$\pi:\mathcal C^*\{I_{\mathcal H}, \mathcal T(\underline{T})\}\to \{U_1,\dots,U_{d}\}'$$ such that $\pi \circ \Gamma =I;$
\item {\bf Commutant pseudo-extension:} There exists a completely contractive, unital and multiplicative mapping $$\Theta:\{T_1,\dots,T_{d}\}'\to \{U_1,\dots,U_{d}\}'$$
defined by $\Theta(X)=\pi(\fJ ^*\fJ X)$ which satisfies $$\Theta(X)\fJ =\fJ X.$$
\end{enumerate}
\end{thm}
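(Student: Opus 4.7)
The plan combines the canonical unitary pseudo-extension from Theorem~\ref{Thm:Ext} with the Choi--Effros machinery of Lemmas~\ref{L:PJFA} and \ref{L:CE}. Take $(\fJ,\cK,\underline{U})$ to be the canonical pseudo-extension from the proof of $(2)\Rightarrow(3)$ of Theorem~\ref{Thm:Ext}, so that $\fJ T_j=U_j\fJ$ and $\fJ^*\fJ=Q:=\operatorname{SOT-}\lim P^{*n}P^n$. The containment $\Gamma(\{U_j\}')\subseteq\mathcal T(\underline T)$ follows immediately from the unitarity of $U_j$:
\[
T_j^*\Gamma(Y)T_j=(U_j\fJ)^*Y(U_j\fJ)=\fJ^*U_j^*U_jY\fJ=\Gamma(Y),
\]
and complete positivity and contractivity of $\Gamma$ are automatic. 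In particular $Q=\Gamma(I_\cK)\in\mathcal T(\underline T)$, so $T_j^*QT_j=Q$ for every $j$.

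For part (2), I would apply Lemma~\ref{L:PJFA} to the product contraction $P$ to obtain a completely positive idempotent $\Phi:\mathcal B(\mathcal H)\to\mathcal T(P)$ with $\Phi(I)=Q$. Since $P$ commutes with each $T_j$, the moreover clause gives $\Phi(T_j^*XT_j)=T_j^*\Phi(X)T_j$. By Lemma~\ref{L:CE}, $\mathcal T(P)$ becomes a $C^*$-algebra under the Choi--Effros product $X\cdot Y:=\Phi(XY)$ and $\Phi$ becomes a unital $*$-homomorphism $\mathcal B(\mathcal H)\to(\mathcal T(P),\cdot)$. The crux is that the subspace $\mathcal T(\underline T)\subseteq\mathcal T(P)$ is in fact a $*$-subalgebra of $(\mathcal T(P),\cdot)$, naturally isomorphic to $\{U_j\}'$ via $\Gamma^{-1}$. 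Concretely, one must verify the identity
\[
\Gamma(YY')=\Phi\bigl(\Gamma(Y)\Gamma(Y')\bigr),\qquad Y,Y'\in\{U_j\}'.
\]
Using $\fJ P^n=U^n\fJ$ and the fact that $Y,Y'$ commute with each $U_j^*$ by Fuglede's theorem, the pre-Banach-limit quantity rewrites as
\[
P^{*n}\Gamma(Y)\Gamma(Y')P^n=\fJ^*Y\bigl(U^{*n}\fJ\fJ^*U^n\bigr)Y'\fJ,
\]
so the identity reduces to controlling the asymptotic behaviour of $U^{*n}\fJ\fJ^*U^n$ against vectors of the form $Y'\fJ h$ and $Y^*\fJ k$. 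This is where the minimality of the canonical pseudo-extension enters crucially, and it constitutes the main technical obstacle. Granting the identity, $\pi:=\Gamma^{-1}\circ\Phi$ is the required surjective $*$-representation of $C^*\{I_{\mathcal H},\mathcal T(\underline T)\}$ onto $\{U_j\}'$ with $\pi\circ\Gamma=I$, and the complete isometry of $\Gamma$ is then formal:
\[
\|Y\|_n=\|\pi(\Gamma(Y))\|_n\le\|\Gamma(Y)\|_n\le\|Y\|_n.
\]

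For part (3), note that $X\in\{T_j\}'$ gives $T_j^*(QX)T_j=(T_j^*QT_j)X=QX$, so $QX=\fJ^*\fJ X\in\mathcal T(\underline T)$ and $\Theta(X):=\pi(\fJ^*\fJ X)$ is well-defined in $\{U_j\}'$. Applying Theorem~\ref{Thm:PLT} produces $Y\in\{U_j\}'$ with $Y\fJ=\fJ X$; since $\Gamma(Y)=\fJ^*\fJ X=QX$, the relation $\pi\circ\Gamma=I$ identifies $Y=\Theta(X)$, whence $\Theta(X)\fJ=\fJ X$. Unitality $\Theta(I_{\mathcal H})=\pi(Q)=\pi(\Gamma(I_\cK))=I_\cK$ is immediate, and
\[
\Theta(X)\Theta(X')\fJ=\Theta(X)\fJ X'=\fJ XX'=\Theta(XX')\fJ,
\]
combined with minimality of $\cK$ (both operators lie in $\{U_j\}'$ and agree on $\fJ\mathcal H$, hence on all of $\cK$ by $\underline U$-commutation), yields multiplicativity $\Theta(XX')=\Theta(X)\Theta(X')$. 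Complete contractivity follows from the matrix-amplified version of Theorem~\ref{Thm:PLT}.
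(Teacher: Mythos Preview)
Your outline diverges from the paper's in a substantive way. The paper does \emph{not} start from the canonical pseudo-extension of Theorem~\ref{Thm:Ext} and then attempt to build $\pi$; instead it takes the completely positive idempotent $\Phi$ of Lemma~\ref{L:PJFA}, restricts it to $C^*(I_\cH,\mathcal T(P))$, and lets $(\cK,\pi,\fJ)$ be its \emph{minimal Stinespring dilation}. Thus $\pi$ is a unital $*$-representation from the outset, and the heart of the argument is the kernel identity $\operatorname{Ker}\Phi=\operatorname{Ker}\pi$ (which uses that $\operatorname{Ker}\Phi$ is an ideal, via Lemma~\ref{L:CE}). From this one reads off $\pi(X)=\pi(\Phi(X))$, hence $\pi\circ\Gamma=I$ on $\operatorname{Ran}\pi$, and injectivity of $\Gamma$; a Kadison-type argument then forces $\operatorname{Ran}\pi=\{U\}'$ and the minimality $\cK=\cK_0$. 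Only afterwards does one set $U_j:=\pi(QT_j)$ and check that $(\fJ,\cK,\underline U)$ is a canonical pseudo-extension.

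Your proposal has a genuine gap precisely at the point you flag. You need the multiplicativity identity $\Gamma(YY')=\Phi(\Gamma(Y)\Gamma(Y'))$ to define $\pi:=\Gamma^{-1}\circ\Phi$ as a $*$-homomorphism, and you explicitly grant it. The reduction to the Banach-limit behaviour of $U^{*n}\fJ\fJ^*U^n$ is correct, but minimality of $(\fJ,\cK,\underline U)$ does not by itself control this: vectors of the form $Y'\fJ h$ lie in all of $\cK$, not in $\fJ\cH$, and there is no intertwining relation that lets you push $U^{*}$ through $\fJ$. (Concretely, in the model of Theorem~\ref{Thm:Ext} one has $\fJ\fJ^*=Q\,P_{\overline{\operatorname{Ran}}Q}$, and $U^{*n}\fJ\fJ^*U^n$ need not approach $I_\cK$ even in the Banach-limit sense when restricted to $\cK\ominus\overline{\operatorname{Ran}}Q$.) There is a second, related gap: for $\pi=\Gamma^{-1}\circ\Phi$ even to be well-defined on $\mathcal T(\underline T)\subseteq C^*(I_\cH,\mathcal T(\underline T))$ you need $\mathcal T(\underline T)\subseteq\operatorname{Ran}\Gamma$, i.e.\ surjectivity of $\Gamma$, which you never establish independently. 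The Stinespring route bypasses both issues at once: multiplicativity of $\pi$ is free, and surjectivity of $\Gamma$ follows from $\pi\circ\Gamma=I$ together with $\pi(C^*(I_\cH,\mathcal T(P)))=\{U\}'$. Your treatment of part~(3) via Theorem~\ref{Thm:PLT} is clean and would work once part~(2) is in place, though complete contractivity of $\Theta$ does not follow from a na\"ive matrix amplification of Theorem~\ref{Thm:PLT} (the lift it produces for $[X_{ij}]$ is not a priori $[\Theta(X_{ij})]$); in the paper this comes for free because $\Theta=\pi(Q\,\cdot\,)$ with $\pi$ a $*$-representation and $\|Q\|\le 1$.
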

\begin{proof}
We start with the contraction $P = T_1T_2 \ldots T_d$ and the idempotent, completely positive and completely contractive map $\Phi:\mathcal B(\mathcal H)\to \mathcal B(\mathcal H)$ such that
\begin{align}\label{P-Toep}
\text{Ran}\Phi=\{X\in \mathcal B(\mathcal H):P^*XP=X\}=\mathcal T(P),
\end{align}
as obtained in Lemma~\ref{L:PJFA}.
%This readily implies that $\mathcal T(P)$ is fixed by $\Phi$.
%%Indeed, if $X \in \mathcal T(P)$, then there is a $Z \in \mathcal B(\mathcal H)$ such that $\Phi(Z) = X$ and hence
%%$$ \Phi(X) = \Phi(\Phi(Z)) = \Phi(Z) = X.$$
%Moreover, using the last part of Lemma \ref{L:PJFA} again we have
%\begin{align}\label{aux}
%\Phi(I_\mathcal H) = \operatorname{SOT-}\lim_{n\rightarrow \infty} P^{*n}P^n.
%\end{align}
%Let us define the positive contraction
%\begin{align}\label{Q}
%Q:=\Phi_0(I_\mathcal H)=\operatorname{SOT-}\lim_{n\rightarrow \infty} P^{*n}P^n. \text{\textcolor{red}{What is $\Phi_0$?}}
%\end{align}
%It is obvious that $Q$ is in $\mathcal T(P)$. We show that it is also in $\mathcal T(\underline{T})$. To that end, we use the limiting representation (\ref{Q}) of $Q$ and the commutativity of the tuple $\underline{T}$:
%\begin{align*}
%Q-T_j^*QT_j=\lim_n (P^{* n}P^n-T_j^*P^{* n}P^nT_j)=\lim_n P^{*n}(I-T_j^*T_j)P^n = 0,
%\end{align*}where we used Lemma \ref{L:CrucLem} to obtain the last equality.
%
%Thus if part (2) of Theorem \ref{Thm:Ext} is true, then $Q$ is a non-zero operator in $\mathcal T (\underline{T})$ proving that $\mathcal T (\underline{T})$ is non-trivial.
%
%Conversely, if part (2) of Theorem \ref{Thm:Ext} is not true, then $Q = 0$. By (\ref{Q}), we have $\Phi(I_{\cH})=0$. Therefore using (\ref{Identities}) we get
%$$\Phi(X)=\Phi( \Phi(X) I )=  \Phi (X \Phi(I) )= \Phi(0)=0,$$ which shows that $\mathcal T(\underline{T})$ has only the zero operator in it.
Let $\mathcal C^*(I_\mathcal H, \mathcal T(P))$ denote the $C^*$-algebra generated by $\mathcal T(P)$ and $I_{\cH}$. We restrict $\Phi$ to $\mathcal C^*(I_\mathcal H, \mathcal T(P))$ and continue to call it $\Phi$ remembering that the underlying $C^*$-algebra on which it acts is now $\mathcal C^*(I_\mathcal H, \mathcal T(P))$.

  Let $(\mathcal K, \pi, \fJ)$ be the minimal Stinespring dilation of $\Phi$. Thus, $\mathcal K$ is a Hilbert space, $\fJ: \mathcal H \rightarrow \mathcal K$   is a bounded operator and $\pi$ is a unital $*$-representation of $\mathcal C^*(I_\mathcal H, \mathcal T(P))$ taking values in $\mathcal B(\mathcal K)$ such that
\begin{equation}
\label{Stines}
\Phi(X) = \fJ^*\pi (X)\fJ \text{ for every $X\in \mathcal C^*(I_\mathcal H, \mathcal T(P))$}.
\end{equation}
Note that $Q=\Phi(I_\mathcal H)=\fJ^*\fJ=\operatorname{SOT-}\lim_{n\rightarrow \infty} P^{*n}P^n.$

We shall need to go deeper into the properties of the Stinespring triple $(\cK,\pi,\fJ)$. The first property we get is

\vspace*{3mm}

\noindent (${\bf{ P_1}}$) {\em $U:=\pi(QP)$ is a unitary operator. Moreover, $\fJ P=U\fJ $ and $\mathcal K$ is the smallest reducing subspace for $U$ containing $\fJ \mathcal H$.}

\vspace*{3mm}

The proof is somewhat long. Since $\Phi$ has now been restricted to the $C^*$-algebra $C^*(I_\mathcal H, \mathcal T(P))$, its kernel is an ideal in $C^*(I_\mathcal H, \mathcal T(P))$ by Lemma~\ref{L:CE} (when $\Phi$ is allowed as a map on whole of $\mathcal B (\mathcal H)$, its kernel may not be an ideal). In view of the kernel of $\Phi$ being an ideal, it follows from the construction of the minimal Stinespring dilation that
$\text{Ker }\!\Phi= \text{Ker }\!\pi$. Thus
\begin{equation} \label{pix=piphix} \pi(X)=\pi(\Phi(X)) \text{ for any } X\in C^*(I,\mathcal T(P)). \end{equation}
This will be used many times.  Since $\pi$ is a representation, a straightforward computation  gives us
$$ U^*\pi(X)U = \pi(X) \text{ for any } X \in C^*(I,\mathcal T(P)).$$

Since $\pi$ is unital, we get that $U$ is an isometry. If $P'$ is a projection in the weak* closure of $\pi(C^*(I,\mathcal T(P)))$, then we also have $U^*P'U=P'$ and $U^* P'^{\perp}U=P'^{\perp}$. This shows that $UP'=P'U$ and therefore
\[\pi(X)U=U\pi(X)\] for all $X\in C^*(I,\mathcal T(P))$. In particular, it follows that $U$ is a unitary and
\[  \pi(C^*(I_\mathcal H,\mathcal T(P)))\subseteq\{U\}'. \]
We can harvest a quick crucial equality here, viz.,
\begin{equation} \label{thetaXVisVX} \pi(QX)\fJ =\fJ X \end{equation}
if $X \in \mathcal B(\mathcal H)$ commutes with $P$.

The proof of \eqref{thetaXVisVX} follows from two computations. For every $h, h^\prime \in \mathcal H$, we have
\begin{align*}
\langle \pi(QX)\fJ h,\fJ h^\prime \rangle&=\langle \fJ ^*\pi(QX)\fJ h,k\rangle\\
&=\langle \Phi(QX)h,h^\prime \rangle  \quad [\text{using } (\ref{Stines})]\\
&=\langle QXh,h^\prime \rangle \quad [\text{because } \mathcal T (P) \text{ is fixed by } \Phi] =\langle \fJ Xh,\fJ h^\prime \rangle \end{align*}
showing that $P_{\overline{\rm{Ran}} \fJ }\pi(QX)\fJ  = \fJ X$. On the other hand,
\begin{align*}
\|\pi(QX)\fJ h\|^2 &=\langle \fJ ^*\pi(X^*Q^2X)\fJ h,h \rangle \\
&=\langle \Phi(X^* Q^2X)h,h \rangle \\
&=\langle X^* \Phi(Q^2)Xh,h \rangle \quad [\text{by Lemma \ref{L:PJFA}}]\\
&=\langle X^*QXh,h\rangle \quad[\text{by Lemma \ref{L:CE}}] =\| \fJ Xh \|^2.
\end{align*}
Consequently, $\pi(QX)\fJ =\fJ X$ for every $X\in \{P\}'$. This, in particular, proves that $U\fJ=\fJ P$. To complete the proof of ${\bf{ P_1}}$, it is required to establish that $\mathcal K$ is the smallest reducing subspace for $U$ containing $\fJ \mathcal H$. To that end, we consider a map $\Gamma$ from Ran $\pi$ into $\mathcal T (P)$ given by $$\Gamma(\pi(X))=\fJ^*\pi(X)\fJ=\Phi(X) \text{ for all } X\in C^*(I, \mathcal T(P)).$$
It is injective because $\text{Ker }\!\Phi= \text{Ker }\!\pi$.

Since $\Gamma\circ \pi=\Phi$, we have $\Gamma\circ \pi$ to be idempotent and this coupled with the injectivity of $\Gamma$ gives us $\pi\circ\Gamma=I$ on $\pi\{C^*(I,\mathcal T(P))\}$. This immediately implies that $\Gamma$ is a complete isometry.

Let $\mathcal K_0\subseteq \mathcal K$ be the smallest reducing subspace for $U$ containing $\fJ \mathcal H$. Let $P_{\mathcal K_0}$ be the projection in $\mathcal B(\mathcal K)$ onto the space $\mathcal K_0$. Consider the vector space \[P_{\mathcal K_0}\{U\}'P_{\mathcal K_0}:=\{P_{\mathcal K_0}XP_{\mathcal K_0}: X\in\{U\}'\}= \{P_{\mathcal K_0}X|_{\mathcal K_0}\oplus 0_{\mathcal K_0^{\perp}}: X\in\{U\}'\}.  \]
and the map $\Gamma': P_{\mathcal K_0}\{U\}'P_{\mathcal K_0}\to \mathcal T(P)\subseteq \mathcal B(\mathcal H)$ defined by $X\mapsto \fJ^* X\fJ$. This is injective.

Indeed, it is easy to check that $\fJ^* X\fJ\in \mathcal T(P)$ for $X\in \{U\}'$. Now if $\fJ^*X\fJ=0$ for some $X\in \{U\}'$ then using the identity $\fJ P=U\fJ $, we get that
\[  \langle X f(U, U^*)\fJ h, g(U,U^*)\fJ k\rangle =0 \]
for any two variable polynomials $f$ and $g$ and $h,k\in\mathcal H$. This shows that $P_{\mathcal K_0}XP_{\mathcal K_0}=0$ and therefore, $\Gamma'$ is injective. For any $Y\in P_{\mathcal K_0}\{U\}'P_{\mathcal K_0}$,
\begin{align*}
 \Gamma'(P_{\mathcal K_0}\pi(\fJ^*Y\fJ)P_{\mathcal K_0}-Y)=\fJ^*\pi(\fJ^*Y\fJ)\fJ-\fJ^*Y\fJ=  \Phi(\fJ^*Y\fJ)-\fJ^*Y\fJ=0.
\end{align*}
Thus, by the injectivity of $\Gamma'$, we have
\[ P_{\mathcal K_0}\pi(C^*(I,\mathcal T(P)))P_{\mathcal K_0}=P_{\mathcal K_0}\{U\}'P_{\mathcal K_0}\]
In other words, we have a surjective complete contraction
\[ \tilde{C}_{\mathcal K_0}: \pi(C^*(I,\mathcal T(P))) \to P_{\mathcal K_0}\{U\}'P_{\mathcal K_0}= \{P_{\mathcal K_0}X|_{\mathcal K_0}\oplus 0_{\mathcal K_0^{\perp}}: X\in\{U\}'\},\]
defined by $X\mapsto P_{\mathcal K_0}X P_{\mathcal K_0}$. Since $\Gamma=\Gamma'\circ \tilde{C}_{\mathcal K_0}$ and $\Gamma$ is a complete isometry,
$\tilde{C}_{\mathcal K_0}$ is a complete isometry. Then the induced compression map
\[C_{\mathcal K_0}: \pi(C^*(I,\mathcal T(P))) \to \{P_{\mathcal K_0} U|_{\mathcal K_0}\}'\subseteq\mathcal B(\mathcal K_0),\quad  X\mapsto P_{\mathcal K_0}X|_{\mathcal K_0} \]
is a unital complete isometry and therefore a $C^*$-isomorphism by a result of Kadison (\cite{Kadison}). Hence by the minimality of the Stinespring representation $\pi$ we have
$\mathcal K=\mathcal K_0$ and therefore $\pi(C^*(I,\mathcal T(P))) =\{U\}'$. This not only completes the proof of ${\bf{ P_1}}$, but also proves

\vspace*{3mm}

\noindent (${\bf{ P_2}}$) {\em The map $\Gamma:\{U\}' \to \mathcal T(P)$ defined by $\Gamma (Y)=\fJ ^*Y\fJ $, for all $Y\in \{U\}'$, is surjective and a complete isometry.}

\noindent (${\bf{ P_3}}$) {\em The Stinesrping triple $(\cK,\pi,\fJ )$ satisfies $\pi\circ\Gamma=I$. In particular,
$$\pi(C^*(I_\mathcal H,\mathcal T(P)))=\{U\}'.$$}
\vspace*{3mm}
The final property that we shall need is

\vspace*{3mm}

\noindent (${\bf{ P_4}}$) The linear map $\Theta : \{P\}' \to  \{U\}'$
defined by $\Theta(X) = \pi(QX)$ is completely contractive, unital and multiplicative.

\vspace*{3mm}

To prove ${\bf{ P_4}}$, first note that $\Theta$ is completely contractive and unital as $\pi(Q)=I$. We have also proved that
$\Theta (X)\fJ=\fJ X$ for all $X\in\{P\}'$. Since, for $X,Y\in \{P\}'$,
\[ \Gamma(\Theta(XY)-\Theta(X)\Theta(Y))=\fJ^*\fJ XY-\fJ^*\Theta(X)\Theta(Y)\fJ=0,\]
then by injectivity of $\Gamma$ we have $\Theta$ is multiplicative and this completes the proof of ${\bf{ P_4}}$.

Since we have now developed the properties of the Stinespring dilation of $\Phi$ in detail, we are ready to complete the proof of the theorem. Define
$$U_i := \pi (QT_i) \mbox{ for } 1\leq i \leq d.$$
We observe that
$$U_1U_2\cdots U_d=\pi(QP)=U.$$ Indeed, using the property $({\bf P_4})$ above, we get
\begin{align*}
U=\pi(QP)=\Theta(P)&=\Theta(T_1)\Theta(T_2)\cdots\Theta(T_d)\\
&=\pi(QT_1)\pi(QT_2)\cdots\pi(QT_d)=U_1U_2\cdots U_d.
\end{align*}
Therefore each $U_j$ is a unitary operator.

That the triple $(\fJ,\cK,\underline{U}=(U_1,U_2,\dots,U_d))$ is actually a canonical pseudo-extension of $\underline{T}$ follows from \eqref{thetaXVisVX} when applied to $X=T_j$ for each $j=1,2,\dots,d$. Minimality of the pseudo-extension $\underline{U}$ follows from $({\bf{P_1}})$, which says that $\cK$ is actually equal to
\begin{align*}
\overline{\operatorname{span}}\{U^m\fJ h:h\in\cH \text{ and }m\in \mathbb Z\}.
\end{align*}
Let $\Gamma$ be as in $(\bf P_2)$ above. Note that
 \begin{align*}
 \{U_1,U_2,\dots,U_d\}'\subset\{U\}'.
 \end{align*} Consider the restriction of $\Gamma$ to $\{U_1,U_2,\dots,U_d\}'$ and continue to denote it by $\Gamma$. Since complete isometry is a hereditary property, to prove part (1),
 all we have to show is that $\Gamma(Y)$ lands in $\cT(\underline{T})$, whenever $Y$ is in $\{U_1,U_2,\dots,U_d\}'$ and $\Gamma$ is surjective.
 To that end, let $Y \in \{U_1,\dots,U_{n}\}'$.  Then for each $j=1,2,\dots,d$, we see that
\begin{eqnarray*}
T_j^*\Gamma(Y)T_j=T_j^*\fJ ^*Y\fJ T_j&=&\fJ ^*U_j^*YU_j\fJ =\fJ ^*Y\fJ =\Gamma(Y).
\end{eqnarray*}
Thus $\Gamma$ maps $\{U_1,\dots,U_{d}\}'$ into $\mathcal T(\underline{T})$. For proving surjectivity of $\Gamma$, let $X\in \mathcal T (\underline{T})$. This, in particular, implies that $X$ is in $\mathcal T(P)$. Applying $({\bf{ P_2}})$ again we have an $Y$ in $\{U\}'$ such that $\Gamma(Y)=\fJ ^*Y\fJ =X$.
It remains to show that this $Y$ commutes with each $U_j$. Since $X\in \mathcal T (\underline{T})$, we have
\begin{align*}
T_j^* XT_j=X \text{ for each } j=1,2,\dots, d
\end{align*}which is the same as $T_j^*\fJ ^*Y\fJ T_j = \fJ ^*Y\fJ$. Applying the intertwining property of $\fJ$, we get for each $j$
\begin{align*}
\fJ ^*U_j^*YU_j\fJ  = \fJ ^*Y\fJ
\end{align*} which is the same as $\Gamma(U_j^*YU_j-Y) =0$ for each $j$. Since $\Gamma$ is an isometry, the commutativity of $Y$ with each $U_j$ is established. This completes the proof of part (1).

%For part (2), let the $*$-representation $\pi$ be as in $({\bf P_3})$ above. Denote its restriction to $C^*(I_{\cH},\cT(\underline{T}))$ by itself.
Part (2) of the Theorem follows from the content of $({\bf{P_3}})$ if we restrict $\pi$ to ${C^*(I,\mathcal T(\underline{T}))}$ and continue to call it $\pi$.

For the last part of theorem, let us take $\Theta$ as in $({\bf{P_4}})$, i,e.,
$$\Theta(X)=\pi(QX)$$ for every $X$ in $\{P\}'$. Restrict $\Theta$ to ${\{T_1,\dots,T_{d}\}'}$ and continue to call it $\Theta$. The aim is to show that $\Theta(X)\in \{U_1, \ldots ,U_{d}\}'$ if $X\in \{T_1,\dots,T_{d}\}'$.
For this we first observe that if $X$ commutes with each $T_j$, then $QX$ is in $\cT(\underline{T})$.
Now the rest of the proof follows from part (2) of the theorem and \eqref{thetaXVisVX}.
\end{proof}

\vspace{0.1in} \noindent\textbf{Acknowledgement:}
The first named author's research is supported by the University Grants Commission Centre for Advanced Studies. The research works of the second and third named authors are supported by DST-INSPIRE Faculty Fellowships DST/INSPIRE/04/2015/001094 and DST/INSPIRE/04/2018/002458 respectively.

\end{document}